\def\@seccntformat#1{\csname the#1\endcsname.\quad}
\renewcommand\section{\@startsection {section}{1}{\z@}%
                     {-3.5ex \@plus -1ex \@minus -.2ex}%
                     {2.3ex \@plus.2ex}%
                     {\normalfont\large\bf\centering}}
\numberwithin{equation}{section}
\theoremstyle{plain}
\newtheorem{theorem}[equation]{Theorem}
\newtheorem{lemma}[equation]{Lemma}
\newtheorem{proposition}[equation]{Proposition}
\theoremstyle{definition}
\newtheorem{definition}[equation]{Definition}
\theoremstyle{remark}
\newtheorem{remark}[equation]{Remark}
\newtheorem*{remark*}{Remark}
\newcommand{\kyosu}{\sqrt{-1}\,}
\newcommand{\XH}{\widehat{X}}
\newcommand{\NH}{\widehat{\nabla}}
\newcommand{\pa}{\partial}
\newcommand{\ep}{\varepsilon}
\newcommand{\al}{\alpha}
\newcommand{\be}{\beta}
\newcommand{\ov}{\overline}
\renewcommand{\Re}{\mathrm{Re}}
\renewcommand{\Im}{\mathrm{Im}}
\title{A concrete approach to diagonal short time asymptotics of 
heat kernels associated with sub-Laplacian on CR manifolds}
\author{ Hiroki Kondo 
 \thanks{\ E-mail : h-kondo@math.kyushu-u.ac.jp}
\\ {\small\textit{Graduate School of Mathematics, Kyushu University}}
}
\date{}
\begin{document}
\maketitle

\begin{abstract}
A diffusion process associated with the real sub-Laplacian $\Delta_b$, 
the real part of the complex Kohn-Spencer Laplacian $\square_b$, 
on a strictly pseudoconvex CR manifold is constructed in \cite{kondo-taniguchi}. 
In this paper, we investigate diagonal short time asymptotics of 
the heat kernel corresponding to the diffusion process by using 
Watanabe's asymptotic expansion and give a representation for the asymptotic expansion
of heat kernels which shows a relationship to the geometric structure. 
\medskip

\noindent{\itshape 2010 Mathematics Subject Classification.} Primary 58J65; Secondary 60J60.\\
\noindent{\itshape Key words and phrases.} CR manifold, Stochastic differential equation, Malliavin calculus, Heat kernel, Short time asymptotics. 
\end{abstract}

\section*{Introduction}
A diffusion process associated with the real sub-Laplacian $\Delta_b$ on a 
strictly pseudoconvex CR manifold $M$ is constructed and 
the existence of $C^{\infty}$ heat kernel 
$p(t,x,y)$, $t\in[0,\infty)$, $x,y\in M$ for $\Delta_b$ is studied in 
\cite{kondo-taniguchi}. 
As a question arising naturally from this result, 
we consider diagonal short asymptotics of the heat kernel, 
i.e. asymptotic behavior of $p(t,x,x)$ as $t$ tends to zero. 

In \cite{kondo-taniguchi}, the diffusion process on $M$ is constructed 
via the Eells-Elworthy-Malliavin method. 
More precisely, a stochastic differential equation (SDE in abbreviation) on
a complex unitary frame bundle is considered and 
its unique strong solution is projected onto $M$, then 
the resulting process on $M$ is the desired diffusion process generated by $-\Delta_b/2$. 

Since the diffusion process is obtained as a strong solution of an SDE, 
we can utilize the asymptotic expansion theory due to 
Watanabe (see \cite{i-w}) to investigate asymptotic behavior of 
the heat kernel associated with the diffusion process. 
To be more precise, if the solution $U^{\ep}_t$, $t\in[0,\infty)$ of the SDE
which is obtained by putting a parameter $\ep>0$ into the 
SDE defining the diffusion process has a suitable expansion in $\ep$, 
then composing with the delta function and taking the generalized 
expectation give an asymptotic expansion of $p(\ep^2,x,x)$ in $\ep$. 

When proceeding in this approach, 
the remaining task to get over is 
to get the asymptotic expansion of $U^{\ep}_t$. 
On this point, Takanobu \cite{takanobu} gives a quite general result
taking advantage of 
the stochastic Taylor expansion.
The aim of the present paper is to show in the case of CR manifolds that
the process $U^{\ep}_t$ can be asymptotically expanded in a way that
is different from the stochastic Taylor expansion and
is more close to the geometric structure of CR manifolds. 
Our expansion takes advantage of the fact that,
in a certain local coordinate system introduced by Folland-Stein \cite{folland-stein},
behavior of CR manifolds can be seen as a perturbation of the 
Heisenberg group. 
To obtain the desired expansion, 
we will refine the result of \cite{folland-stein}, that is, 
we will determine all higher terms of the asymptotic behavior in this coordinate. 

The main result in this paper is that, 
in the asymptotic expansion of the heat kernel 
associated with $-\Delta_b/2$, 
the leading term depends only on the dimension of $M$, 
and coefficients of higher degree are represented in terms of 
the Takana-Webster connection on $M$. 
The result seems of interest in the point that
the coefficients of the asymptotic expansion can be
written in a concrete way with the expectation of 
Wiener functionals. 
This will be described through some examples. 

\bigbreak

In Section \ref{sec.diffusion}, 
we will give a review on CR manifolds, the Tanaka-Webster connection, and
the diffusion process on CR manifolds constructed in \cite{kondo-taniguchi}. 
Section \ref{sec.Folland-Stein} will be devoted to a study on the asymptotic behavior
of the local orthonormal frame written in the Folland-Stein local coordinate. 
We need information of higher terms of asymptotic expansion than shown in the 
original literature \cite{folland-stein}. 
Our main theorem which describes a relationship between the coefficient of the
asymptotic expansion of the heat kernel and the Tanaka-Webster connection
will be shown in Section \ref{sec:asymptotic expansion}. 
Finally in Section \ref{sec:ex} we will study examples of the asymptotic expansion, 
in the cases of the Heisenberg group and the CR sphere.

\section{Diffusion associated with the sub-Laplacian}\label{sec.diffusion}
We begin with recalling diffusion processes associated with sub-Laplacian
on CR manifolds constructed in \cite{kondo-taniguchi}. 

\subsection{CR manifolds}
We briefly review results on CR manifolds following \cite{dragomir-tomassini}. 
Let $M$ be an oriented strictly pseudoconvex CR manifold of 
dimension $2n+1$ with $n\in\{1,2,3,\ldots\}$, i.e.
$M$ is a real oriented manifold of dimension $2n+1$ together with
a complex subbundle $T_{1,0}$ of the complexified tangent bundle
$\mathbb{C}TM=TM\otimes_{\mathbb{R}}\mathbb{C}$ such that
$T_{1,0}\cap T_{0,1}=\{0\}$ and $[T_{1,0},T_{1,0}]\subset T_{1,0}$, 
where $T_{0,1}=\overline{T_{1,0}}$, 
and a real non-vanishing 1-form $\theta$ on $M$ which annihilates 
$H=\Re(T_{1,0}\oplus T_{0,1})$ such that the Levi form
\[
L_{\theta}(Z,W)=-\kyosu d\theta(Z,W), \qquad 
Z,W: \text{$C^{\infty}$ cross section of $T_{1,0}\oplus T_{0,1}$}
\]
is positive definite. 
Let $T$ be the characteristic direction, that is, 
the unique real vector field on $M$ such that
$T\rfloor d\theta=0$, $T\rfloor\theta=1$, where 
$T\rfloor \omega$ is the interior product. 

The real sub-Laplacian $\Delta_b$ on functions is defined by 
\[
\langle \Delta_b u,v\rangle_{\theta}=\langle d_bu,d_bv\rangle, \qquad
u,v\in C_0^{\infty}(M), 
\]
where $\langle,\rangle$ is the $L^2$-inner product on functions given by
\[
\langle u,v\rangle_{\theta}=\int_M u\overline{v}\psi, \qquad \psi=\theta\wedge(d\theta)^n, 
u,v\in C_0^{\infty}(M;\mathbb{C})\equiv\{f+\kyosu g\mid f,g\in C_0^{\infty}(M)\}, 
\]
and the section $d_bu$ for $u\in C^{\infty}(M)$ is defined by
the composite of $du$ with the natural restriction $r\colon T^*M\to H^*$.

\subsection{Tanaka-Webster connection}
We now review a connection on the $T_{1,0}$ due to 
Tanaka \cite{tanaka} and Webster \cite{webster}, following \cite{dragomir-tomassini}. 

Let $J\colon H\to H$ be the complex structure related to $(M,T_{1,0})$, 
which means that 
the $\mathbb{C}$-linear extension of $J$ is the 
multiplication by $\kyosu$ on $T_{1,0}$ and $-\kyosu$ on $T_{0,1}$
(note that $H\otimes_{\mathbb{R}}\mathbb{C}=T_{1,0}\oplus T_{0,1}$). 
We extend $J$ linearly to $TM$ by $J(T)=0$. 

Since $TM=H\oplus\mathbb{R}T=\{X+aT\mid X\in H, a\in\mathbb{R}\}$, there exists the unique
Riemannian metric $g_{\theta}$ on $M$ satisfying that
\[
g_{\theta}(X, Y)=d\theta(X,JY), \quad g_{\theta}(X,T)=0, \quad g_{\theta}(T,T)=1, 
\quad X, Y\in H.
\]
$g_{\theta}$ is called the Webster metric.
We extend $g_{\theta}$ to $\mathbb{C}TM$ $\mathbb{C}$-bilinearly. 

The Tanaka-Webster connection is the unique linear connection $\nabla$ on $M$
satisfying that 
\begin{align}
&\nabla_X Y\in\Gamma^{\infty}(H), \quad X\in\Gamma^{\infty}(TM), Y\in\Gamma^{\infty}(H), \nonumber\\
&\nabla J=0, \quad \nabla g_{\theta}=0, \nonumber\\
& T_{\nabla}(Z,W)=0, \quad Z,W\in \Gamma^{\infty}(T_{1,0}), \label{eqn.nabla.orth}\\
& T_{\nabla}(Z,W)=2\kyosu L_{\theta}(Z,W)T, \quad Z\in \Gamma^{\infty}(T_{1,0}), W\in \Gamma^{\infty}(T_{0,1}),\label{eqn.nabla.norm}\\
& T_{\nabla}(T,J(X))+J(T_{\nabla}(T,X))=0, \quad X\in \Gamma^{\infty}(TM),\nonumber
\end{align}
where $\nabla_X$ is the covariant derivative in the direction of $X$ and 
$T_{\nabla}$ is the torsion tensor field of $\nabla$: 
$T_{\nabla}(Z,W)=\nabla_{Z}W-\nabla_{W}Z-[Z,W]$. 

Let $\{Z_{\alpha}\}_{\alpha=1,\ldots,n}$ be a local
orthonormal frame for $T_{1,0}$ on an open set $U$, that is, 
$Z_{\alpha}$ is a $T_{1,0}$-valued section defined on $U$ and 
$g_{\theta}(Z_{\alpha},Z_{\overline{\beta}})=\delta_{\alpha\beta}$, where 
$Z_{\overline{\beta}}=\overline{Z_{\beta}}$.  
If we set $Z_0=T$, 
$\{Z_A\}_{A=0,1,\ldots,n,\overline{1},\ldots,\overline{n}}$ is a local frame for $\mathbb{C}TM$. 
We define Christoffel symbols 
$\Gamma_{AB}^C$ for 
$A, B, C=0,1,\ldots,n,\overline{1},\ldots,\overline{n}$ by
\[
\nabla_{Z_A}Z_B=\sum_{C\in\{0,1,\ldots,n,\overline{1},\ldots,\overline{n}\}}\Gamma_{AB}^C Z_C.
\]
Christoffel symbols satisfy the following: 
\begin{align}
&\Gamma_{AB}^C=0, \quad \text{unless 
$(B,C)\in\{(\beta,\gamma),(\overline{\beta},\overline{\gamma});\beta,\gamma=1,\ldots,n\}$},\\
&\Gamma_{A\beta}^{\gamma}+\Gamma_{A\overline{\gamma}}^{\overline{\beta}}=0,\quad
\beta,\gamma=1,\ldots,n, \quad A=0,1,\ldots,n,\overline{1},\ldots,\overline{n}. \label{eqn.nabla.antisym}
\end{align}

\subsection{Diffusion process}
A diffusion process $\mathbb{X}=\{(\{X(t)\}_{t\ge0},P_x);x\in M\}$ generated by $-\Delta_b/2$
is constructed in \cite{kondo-taniguchi} via the Eells-Elworthy-Malliavin method. 
To be more precise, 
let $U(T_{1,0})$ be the complex unitary bundle over $M$ given by
\[
U(T_{1,0})_x=\{r\colon\mathbb{C}^n\to(T_{1,0})_x;\text{$r$ is an isometry}\}, 
\]
and $\pi\colon U(T_{1,0})\to M$ be the natural projection. 
Here $\mathbb{C}^n$ is considered to be equipped by the standard metric and 
$T_{1,0}$ by the Webster metric. 

The canonical vector fields $\{L_{\alpha}\}_{\alpha=1,\ldots,n}$ on $U(T_{1,0})$
associated with the Tanaka-Webster connection can be defined via 
horizontal lift (see \cite{kondo-taniguchi} for detail). 

Let $\{B^1_t,\ldots,B^{2n}_t\}_{t\ge0}$ be a $2n$-dimensional Brownian motion and 
let 
\[
B^{\alpha}_t=\frac{1}{\sqrt{2}}(B^{\alpha}_t+\kyosu B^{n+\alpha}_t), \qquad
B^{\overline{\alpha}}_t=\frac{1}{\sqrt{2}}(B^{\alpha}_t-\kyosu B^{n+\alpha}_t),\quad
\alpha=1,\ldots,n.
\]
The SDE on $U(T_{1,0})$ 
\[
dr_t=\sum_{\alpha=1}^n L_{\alpha}(r_t)\circ dB^{\alpha}_t
+\sum_{\alpha=1}^n\overline{L_{\alpha}}(r_t)\circ dB^{\overline{\alpha}}_t, \qquad
r_0=r\in U(T_{1,0}), 
\]
or equivalently, 
\[
dr_t=
\sum_{\alpha=1}^n \sqrt{2}\Re L_{\alpha}(r_t)\circ dB^{\alpha}_t
+\sum_{\alpha=1}^n \sqrt{2}\Im L_{\alpha}(r_t)\circ dB^{n+\alpha}_t, \qquad
r_0=r\in U(T_{1,0}), 
\]
has a unique strong solution $\{r_t=r(t,r,B)\}_{t\ge0}$. 
Here the symbol $\circ$ stands for the Stratonovich integral.

Projecting the process $r_t$ on $M$ yields the 
desired diffusion process $\mathbb{X}$, because
for any $x\in M$, 
the induced probability measures on the space of continuous functions 
$[0,\infty)\to M$ coincide for all $r\in\pi^{-1}(x)$. 

We note here a local representation of $-\Delta_b/2$. 
Let $\{Z_{\alpha}\}_{\alpha=1,\ldots,n}$ be a local orthonormal frame for $T_{1,0}$ on
an open set $U$ of $M$, and
$\Gamma_{AB}^C$ the associated Christoffel symbols as in the previous subsection. 
Then a straightforward calculation shows that 
$-\Delta_b/2$ is represented locally as follows:
\[
-\frac{1}{2}\Delta_b
=\frac{1}{2}\sum_{\alpha=1}^n(Z_{\alpha}Z_{\overline{\alpha}}+Z_{\overline{\alpha}}Z_{\alpha})
-\frac{1}{2}\sum_{\alpha,\beta=1}^n
(\Gamma_{\overline{\beta}\beta}^{\alpha}Z_{\alpha}
+\Gamma_{\beta\overline{\beta}}^{\overline{\alpha}}Z_{\overline{\alpha}}). 
\]

\section{Folland-Stein normal coordinate}\label{sec.Folland-Stein}
Let $\{Z_{\alpha}\}_{\alpha=1,\ldots,n}$ be a local orthonormal frame around 
a fixed point $x\in M$. 
We set
\[
\XH_{\alpha}
=\sqrt{2}\Re Z_{\alpha}=\frac{1}{\sqrt{2}}(Z_{\alpha}+Z_{\overline{\alpha}}),
\qquad
\XH_{n+\alpha}
=\sqrt{2}\Im Z_{\alpha}=-\frac{\kyosu}{\sqrt{2}}(Z_{\alpha}-Z_{\overline{\alpha}})
\]
for $\alpha=1,\ldots,n$ and $\XH_{2n+1}=T$. 
Given $u=(u^1,\ldots,u^{2n+1})\in\mathbb{R}^{2n+1}$, 
let $C_u\colon[0,1]\to M$ be the integral curve of the tangent vector field
\[
\XH_u=\sum_{j=1}^{2n+1}u^j\XH_j
\]
starting at $x$, i.e. $C_u$ is defined by
\[
\frac{dC_u}{dt}(t)=\XH_u(C_u(t)), \qquad C_u(0)=x. 
\]
We set $E_x(u)=C_u(1)$. 
By a standard theory of ordinary differential equations, 
$E_x$ defines a diffeomorphism of a neighborhood $U_x$ of $0\in\mathbb{R}^{2n+1}$ onto
a neighborhood $V_x$ of $x\in M$. 
The inverse map $E_x^{-1}\colon V_x\to U_x$ defines a local chart of $M$ and
the resulting local coordinates $u^j\colon V_x\to\mathbb{R}$, $j=1,\ldots,2n+1$, 
are referred to as the Folland-Stein normal coordinates. 
We hereinafter regard a function on $V_x$ as a function of $u\in U_x$ via $E_x$. 

To describe an asymptotic behavior of vector fields with respect to these coordinates, 
the following asymptotic notation is introduced: 

\begin{definition}
For $a\ge1$, a function $f$ on $V_x$ is said to be $O^a$ if
$f(u)=O\bigl(\bigl(\sum_{j=1}^{2n}\lvert u^j\rvert+\lvert u^{2n+1}\rvert^2\bigr)^a\bigr)$ as 
$u=(u^1,\ldots,u^{2n+1})\to 0$. We may simply write $f=O^a$. 
\end{definition}

Define the structure functions $C^i_{jk}\colon U_x\to\mathbb{R}$, $i,j,k=1,\ldots,2n+1$ by
\[
[\XH_j,\XH_k]=\sum_{i=1}^{2n+1}C^i_{jk}\XH_i. 
\]

\begin{theorem}\label{thm:F-S}
$\XH_j$, $j=1,\ldots,2n$ can be written with respect to the Folland-Stein
normal coordinates $(u^j)_{j=1,\ldots,2n+1}$ as follows: 
\begin{align*}
\XH_{\alpha}
&=\frac{\pa}{\pa u^{\alpha}}-u^{n+\alpha}\frac{\pa}{\pa u^{2n+1}}\\
&\qquad
+\sum_{i=1}^{2n}
\Bigl(\sum_{1\le\lVert J\rVert\le a}c_{\alpha,i,J}u^J+O^{a+1}\Bigr)\frac{\pa}{\pa u^i}
+\Bigl(\sum_{2\le\lVert J\rVert\le a}c_{\alpha,2n+1,J}u^J+O^{a+1}\Bigr)\frac{\pa}{\pa u^{2n+1}},\\
\XH_{n+\alpha}
&=\frac{\pa}{\pa u^{n+\alpha}}+u^{\alpha}\frac{\pa}{\pa u^{2n+1}}\\
&\qquad
+\sum_{i=1}^{2n}
\Bigl(\sum_{1\le\lVert J\rVert\le a}c_{n+\alpha,i,J}u^J+O^{a+1}\Bigr)\frac{\pa}{\pa u^i}
+\Bigl(\sum_{2\le\lVert J\rVert\le a}c_{n+\alpha,2n+1,J}u^J+O^{a+1}\Bigr)\frac{\pa}{\pa u^{2n+1}}
\end{align*}
for $\alpha=1,\ldots,n$. 
Moreover, each $c_{j,i,J}$, $i=1,\ldots,2n+1$, $j=1,\ldots,2n$, $\lVert J\rVert=a$, 
is an at most $a$-th degree polynomial of 
$\frac{d^{b}}{ds^{b}}C^{\alpha}_{\beta\gamma}(su)|_{s=0}$ for $b\le a$.
\end{theorem}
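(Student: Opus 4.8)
\medskip
\noindent\emph{Proposed proof.} The idea is to encode the defining property of the Folland--Stein coordinates as a first order differential equation for the coefficient functions $a^l_j:=\XH_j(u^l)$ (so that $\XH_j=\sum_{l=1}^{2n+1}a^l_j\,\pa/\pa u^l$) and to solve it recursively in homogeneous degree. The starting point is the \emph{radial identity}: since the integral curve of $\XH_u$ through $x$ satisfies $C_{tu}(s)=C_u(ts)$, the curve $t\mapsto E_x(tu)$ is that integral curve, and in the $u$--coordinates it is the straight ray $t\mapsto tu$, whose velocity at $tu$ is $u$; hence, identically on $U_x$,
\[
\sum_{j=1}^{2n+1}u^j\XH_j=\sum_{l=1}^{2n+1}u^l\frac{\pa}{\pa u^l}=:E .
\]
In particular $dE_x|_0$ carries $\pa/\pa u^j$ to $\XH_j(x)$, so $a^l_j(0)=\delta^l_j$. (The leading term and the fact that the coordinates are well defined are already in \cite{folland-stein}; the new content is the control of all higher terms and of their dependence on the structure functions.)

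Next I would compute $[\XH_j,E]$ two ways: from $E=\sum_l u^l\,\pa/\pa u^l$ one gets $[\XH_j,E]=\XH_j-\sum_l(Ea^l_j)\,\pa/\pa u^l$, while from $E=\sum_k u^k\XH_k$ together with $[\XH_j,\XH_k]=\sum_i C^i_{jk}\XH_i$ one gets $[\XH_j,E]=\sum_i\bigl(a^i_j+\sum_k u^kC^i_{jk}\bigr)\XH_i$. Comparing $\pa/\pa u^l$--components gives the transport equation $Ea^l_j=a^l_j-\sum_i a^i_j a^l_i-\sum_{i,k}u^k C^i_{jk}a^l_i$, valid for all $j,l$. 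Decomposing $a^l_j=\sum_{m\ge0}(a^l_j)_m$ into parts homogeneous of degree $m$, using that $E$ acts on $(a^l_j)_m$ by multiplication by $m$ and that $(a^l_j)_0=\delta^l_j$ (so the quadratic term contributes exactly $2(a^l_j)_m$ at degree $m\ge1$), the equation becomes
\[
(m+1)(a^l_j)_m=-\sum_i\sum_{p=1}^{m-1}(a^i_j)_p(a^l_i)_{m-p}-\Bigl[\sum_{i,k}u^k C^i_{jk}\,a^l_i\Bigr]_m .
\]
The factor $m+1$ never vanishes, so the recursion never stalls: every $(a^l_j)_m$ is determined uniquely, and an induction shows it is a polynomial of degree at most $m$ in the Taylor data $\frac{d^b}{ds^b}C^i_{jk}(su)|_{s=0}$, $b\le m-1$. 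Since $E_x$ is smooth the $a^l_j$ are smooth, so the recursion yields, for each $a^l_j$, a polynomial part plus a smooth remainder; regrouping the homogeneous parts by $\lVert J\rVert$ gives the polynomial displayed in the statement and (the point to check, see below) identifies the remainder with the $O^{a+1}$ terms.

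It then remains to read off the low--order terms. The degree $0$ part $(a^l_j)_0=\delta^l_j$ produces $\pa/\pa u^{\al}$ and $\pa/\pa u^{n+\al}$, and the recursion at $m=1$ gives $(a^l_j)_1=-\tfrac12\sum_k u^k C^l_{jk}(0)$. Taking $l=2n+1$ and $j=\al$: since $Z_\al=\tfrac1{\sqrt2}(\XH_\al+\kyosu\XH_{n+\al})$, one has $[\XH_\al,\XH_{n+\al}]=\kyosu[Z_\al,Z_{\ov\al}]$, whose $T$--component is $2L_\theta(Z_\al,Z_{\ov\al})=2$ by \eqref{eqn.nabla.norm} and the orthonormality of the frame, while the $T$--components of $[\XH_\al,\XH_\be]$ for $\be\le2n$, $\be\ne n+\al$, and of $[\XH_\al,T]$, all vanish (using \eqref{eqn.nabla.orth}, the CR integrability $[T_{1,0},T_{1,0}]\subset T_{1,0}$, and $T\rfloor d\theta=0$). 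Hence $(a^{2n+1}_\al)_1=-u^{n+\al}$, and symmetrically $(a^{2n+1}_{n+\al})_1=u^{\al}$, which are exactly the Heisenberg terms; the remaining degree $1$ pieces $(a^i_\al)_1$ with $i\le2n$ are the $\lVert J\rVert=1$ coefficients $c_{\al,i,J}$. Since, by the recursion, the number of structure--function factors picked up through degree $m$ is at most $m\le\lVert J\rVert$, the claimed degree bound on the $c_{j,i,J}$ follows.

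I expect the delicate step to be the recursion and its bookkeeping. The indices $i,k$ above run over all of $1,\dots,2n+1$, so the system for the $a^l_\al$ with $\al\le2n$ genuinely couples to the coefficients $a^l_{2n+1}$ of $T=\XH_{2n+1}$; these must be carried along, and one checks that their equations share the same non--vanishing factor $m+1$ and are solved by the same procedure. The remaining point is organizational: the recursion is naturally graded by homogeneous degree, whereas the statement is graded by the weighted length $\lVert J\rVert$ and uses the weighted $O^a$--notation, and one must verify that the truncated tail has the claimed size in that notation.
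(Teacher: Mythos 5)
Your argument is correct, and in substance it runs on the same mechanism as the paper's, implemented on the matrix $G$ rather than on its inverse. The paper writes $\XH_k=\sum_jG^j_k\,\pa/\pa u^j$, sets $F=G^{-1}$, and quotes Lemma \ref{lem:DT} (the identity $\pa_s\mathcal{A}=I-\Xi\mathcal{A}$ for $\mathcal{A}(s,u)=sF(su)$); Taylor expansion then gives the recursion \eqref{eq:F_Gamma} for the $F^{(a)}$, and $GF=I$ converts this into the product formula for $G^{(a)}$ of Proposition \ref{prop:G_asymp}. Your radial identity $\sum_ju^j\XH_j=\sum_lu^l\,\pa/\pa u^l$ is exactly what underlies that lemma, and computing $[\XH_j,E]$ in two ways gives a self-contained transport equation for $G$ itself, whose homogeneous-degree recursion (with the nonvanishing factor $m+1$) determines everything: your degree-one term agrees with the paper's $G^{(1)}$ after the antisymmetry $C^j_{kl}=-C^j_{lk}$, and your degree count is in fact slightly sharper ($b\le m-1$ at ordinary degree $m\le\lVert J\rVert$, versus the stated $b\le a$). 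What you give up by avoiding the route through $F$ is only the closed product expression for $G^{(a)}$, which the paper exploits later to compute $G^{(2)},G^{(3)}$ explicitly for the sphere example; the theorem itself does not need it. Your identification of the Heisenberg terms rests on the same facts as Proposition \ref{prop:str_const} (the torsion conditions \eqref{eqn.nabla.orth}, \eqref{eqn.nabla.norm} together with orthonormality of the frame); note you do not even need $C^{2n+1}_{\al,2n+1}(0)=0$, since a $u^{2n+1}$-term has weighted degree two and is absorbed into the $\lVert J\rVert\ge2$ sum. The bookkeeping you flag at the end, matching the homogeneous truncation with the weighted $O^{a+1}$ remainder, is genuine but routine and is treated at the same level of detail in the paper: Taylor-expand to ordinary order $2a+2$ and regroup, since a monomial of ordinary degree $d$ is $O^{d/2}$ in the weighted scale.
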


To prove the theorem, write
\[
\XH_k=\sum_{j=1}^{2n+1}G^j_k\frac{\partial}{\partial u^j},\quad k=1,\ldots,2n
\]
using the Folland-Stein normal coordinates, 
where $G^j_k$ is a smooth function on $U_x$. 
Note that $G^j_k(0)=\delta_{jk}$ by definition of $E_x$. 
Since $G(u)=(G^j_k(u))_{j,k=1,\ldots,2n+1}$ is an invertible matrix 
for every $u\in U_x$, we can define 
$F(u)=(F^j_k(u))_{j,k=1,\ldots,2n+1}$, $F^j_k\colon U_x\to\mathbb{R}$ by
$F=G^{-1}$. 

Let us consider the matrices 
$\mathcal{A}=(\mathcal{A}^j_k)_{j,k=1,\ldots,2n+1}$, 
$\Xi=(\Xi^j_k)_{j,k=1,\ldots,2n+1}$, 
$\mathcal{A}^j_k, \Xi^j_k\colon [-1,1]\times U_x\to\mathbb{R}$ defined by
\[
\mathcal{A}^j_k(s,u)=sF^j_k(su),\qquad
\Xi^j_k(s,u)=\sum_{l=1}^{2n+1}C^j_{lk}(su)u^l. 
\]
We use the following lemma \cite[Lemma 3.3]{dragomir-tomassini}:

\begin{lemma}\label{lem:DT}
We have
\[
\frac{\pa}{\pa s}\mathcal{A}=I-\Xi\mathcal{A},
\]
where $I$ is the identity matrix of size $2n+1$. 
\end{lemma}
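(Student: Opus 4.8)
The plan is to prove the identity by combining one geometric input specific to the Folland--Stein coordinates with the coordinate form of the structure equations. Throughout write $\pa_p$ for $\pa/\pa u^p$, let $\mathcal{R}=\sum_{p=1}^{2n+1}w^p\pa_p$ denote the radial vector field at a point with coordinate $w$, and set $K^i_m(w)=\sum_l w^lC^i_{lm}(w)$, so that $K=s\Xi$ when $w=su$. The geometric input I would establish first is the Gauss-type radial identity $\sum_k w^kG^j_k(w)=w^j$, equivalently $G(w)w=w$ and hence $F(w)w=w$. This follows from the very definition of $E_x$: the integral curve of $\XH_u$ through $x$ satisfies $C_u(t)=E_x(tu)$, hence is the coordinate ray $t\mapsto tu$, whose velocity at parameter $t$ is the constant vector $u$; equating this with the coordinate expression $\sum_k u^kG^j_k(tu)$ of $\XH_u$ along the ray gives $\sum_k u^kG^j_k(tu)=u^j$, and taking $t=1$, $u=w$ yields the identity at an arbitrary point. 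I expect this pointwise identity to be the main obstacle, as it is the only place where the specific normal-coordinate structure enters; everything afterwards is bookkeeping.

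Next I would write the structure equations in coordinates. Expanding $[\XH_l,\XH_m]=\sum_i C^i_{lm}\XH_i$ with $\XH_k=\sum_p G^p_k\pa_p$ and matching coefficients of $\pa_q$ gives
\[
\sum_p\bigl(G^p_l\,\pa_pG^q_m-G^p_m\,\pa_pG^q_l\bigr)=\sum_i C^i_{lm}G^q_i.
\]
Multiplying by $w^l$ and summing over $l$, I would simplify the two terms on the left using the radial identity: the first becomes $\sum_p w^p\pa_pG^q_m=\mathcal{R}G^q_m$, while for the second I differentiate $\sum_l w^lG^q_l=w^q$ to obtain $\sum_l w^l\pa_pG^q_l=\delta^q_p-G^q_p$, so that it contributes $-G^q_m+(G^2)^q_m$. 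The right-hand side becomes $(GK)^q_m$. In matrix form this is the linear relation
\[
\mathcal{R}G=G-G^2+GK.
\]

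I would then transfer this to $F=G^{-1}$. Applying $\mathcal{R}$ to $FG=I$ gives $\mathcal{R}F=-F(\mathcal{R}G)F$; substituting the displayed expression for $\mathcal{R}G$ and repeatedly using $FG=GF=I$ collapses $F(G-G^2+GK)=I-G+K$ and then $(I-G+K)F=F-I+KF$, whence
\[
\mathcal{R}F=I-F-KF.
\]
The point worth flagging here is that the quadratic term $G^2$ cancels exactly upon conjugation by $F$, which is precisely what renders the final formula clean.

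Finally I would assemble the pieces at $w=su$. From $\mathcal{A}^j_k=sF^j_k(su)$ and the scaling relation $(\mathcal{R}f)(su)=s\,\pa_s\!\left[f(su)\right]$ one gets $\pa_s\mathcal{A}^j_k=F^j_k(su)+(\mathcal{R}F^j_k)(su)$, i.e. $\pa_s\mathcal{A}=F+\mathcal{R}F$. Combining with the previous display yields $\pa_s\mathcal{A}=F+(I-F-KF)=I-KF$. Since $K=s\Xi$ and $\mathcal{A}=sF$ at $w=su$, we have $KF=s\Xi F=\Xi(sF)=\Xi\mathcal{A}$, and therefore $\pa_s\mathcal{A}=I-\Xi\mathcal{A}$, as claimed. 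The only genuine care needed in this last step is tracking the factor $s$ relating $\mathcal{R}$ to $\pa_s$ and relating $K$ to $\Xi$.
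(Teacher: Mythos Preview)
Your argument is correct. The paper does not actually prove this lemma; it simply cites it as \cite[Lemma 3.3]{dragomir-tomassini}, so there is no in-paper proof to compare against. Your route---the Gauss-type radial identity $G(w)w=w$ coming from the fact that coordinate rays are integral curves of $\XH_u$, followed by contracting the structure equations with the radial field to get $\mathcal{R}G=G-G^2+GK$, then inverting to $\mathcal{R}F=I-F-KF$ and rescaling---is the standard derivation and matches what one finds in the cited reference. All the algebraic steps (in particular the cancellation of the quadratic term upon passing from $G$ to $F$, and the bookkeeping $K(su)=s\Xi$, $\mathcal{A}=sF(su)$) check out.
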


By Taylor's theorem, we can write for $A=0,1,\ldots,$ 
\[
G^j_k(su)=\sum_{a=0}^A\frac{s^a}{a!}
\sum_{\lvert J\rvert=a}
\frac{\pa^a G^j_k}{\pa u^J}(0)u^J
+\frac{s^{a+1}}{(a+1)!}
\sum_{\lvert J\rvert=a+1}
\frac{\pa^{a+1} G^j_k}{\pa u^J}(csu)u^J
\]
for some $c\in(0,1)$. 
Here we use the notation $\lvert J\rvert=a$ and $u^J=u^{j_1}\cdots u^{j_a}$ for
a multiple index $J=(j_1,\ldots,j_a)$, $j_1,\ldots,j_a\in\{1,\ldots,2n+1\}$. 
Since 
\[
\frac{\pa^{a+1} G^j_k}{\pa u^J}(csu)u^J=O^{a+1}
\]
for each $J$ with $\lvert J\rvert=a+1$, 
we have
\[
G^j_k(u)=\sum_{a=0}^A G^{(a)}(u)^j_k+O^{a+1}, 
\]
where
\[
G^{(a)}(u)^j_k
=\frac{1}{a!}
\sum_{\lvert J\rvert=a}
\frac{\pa^a G^j_k}{\pa u^J}(0)u^J. 
\]
To compute $G^{(a)}(u)=(G^{(a)}(u)^j_k)_{j,k=1,\ldots,2n+1}$, 
we note that the Taylor expansion yields
\[
G(su)=\sum_{a=0}^{\infty}s^aG^{(a)}(u), \qquad
F(su)=\sum_{a=0}^{\infty}s^aF^{(a)}(u)
\]
with $G^{(0)}(u)=F^{(0)}(u)=I$ and 
\begin{equation}\label{eq:GF=I}
\sum_{b=0}^a G^{(b)}(u)F^{(a-b)}(u)=0,  \quad a\ge1, 
\end{equation}
because $GF=I$. 
We have by definition of $\mathcal{A}$ an expansion
\[
\mathcal{A}(s,u)=\sum_{a=0}^{\infty}s^{a+1}F^{(a)}(u). 
\]
By Lemma \ref{lem:DT}, we have
\begin{equation}\label{eq:F_Gamma}
(a+1)F^{(a)}(u)=-\sum_{b=0}^{a-1}\Xi^{(b)}(u)F^{(a-b-1)}(u), \quad
a\ge1.
\end{equation}
We have from \eqref{eq:GF=I} and \eqref{eq:F_Gamma} that 
\[
G^{(a)}(u)=
\sum_{b_1+\cdots+b_i=a}c_{b_1,\ldots,b_i}\Xi^{(b_1-1)}(u)\cdots\Xi^{(b_i-1)}(u)
\]
for some constants $c_{b_1,\ldots,b_i}$. 
Therefore we have the following expression:

\begin{proposition}\label{prop:G_asymp}
Each component $G^{(a)}(u)^j_k$ can be written 
as a linear combination of terms
\[
\prod_{q=1}^r\frac{d^{b_q}}{ds^{b_q}}C^{\alpha_q}_{\beta_q\gamma_q}(su)\Bigm|_{s=0}
u^{m_1}\cdots u^{m_i}
\]
with $b_1+\cdots+b_r+m_1+\cdots+m_i=a$. 
For small $a$ we have
\begin{align*}
G^{(1)}(u)^j_k&=\frac{1}{2}\sum_{l=1}^{2n+1}C^j_{lk}(0)u^l, \\
G^{(2)}(u)^j_k&=\frac{1}{12}\sum_{l,m,p=1}^{2n+1}C^j_{lp}(0)C^p_{mk}(0)u^lu^m
+\frac{1}{3}\sum_{l=1}^{2n+1}\frac{d}{ds}C^j_{lk}(su)\Bigm|_{s=0}u^l, \\
G^{(3)}(u)^j_k&=
\frac{1}{8}\sum_{l,m,p=1}^{2n+1}C^j_{lp}(0)\frac{d}{ds}C^p_{mk}(su)\Bigm|_{s=0}u^lu^m
+\frac{1}{8}\sum_{l=1}^{2n+1}\frac{d^2}{ds^2}C^j_{lk}(su)\Bigm|_{s=0}u^l. 
\end{align*}
\end{proposition}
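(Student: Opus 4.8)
The statement has two parts: the general structural description of $G^{(a)}(u)^j_k$, and the explicit formulas for $a\le 3$. For the general part almost everything is already in place in the discussion preceding the statement, so the plan is just to make the chain of identities precise and then read off the Taylor coefficients of $\Xi$.

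First I would prove, by strong induction on $a$, that $F^{(a)}(u)$ --- and hence $G^{(a)}(u)$ --- is a $\mathbb{Q}$-linear combination of matrix products $\Xi^{(c_1)}(u)\cdots\Xi^{(c_k)}(u)$ with $c_q\ge 0$ and $\sum_{q=1}^{k}(c_q+1)=a$. For $F$ this follows from \eqref{eq:F_Gamma}: inserting the inductive hypothesis for $F^{(a-1-b)}$ and noting that prepending one factor $\Xi^{(b)}$ raises the weight $\sum_q(c_q+1)$ by exactly $b+1$, each resulting product has weight $(b+1)+(a-1-b)=a$. For $G$ one then uses $G^{(a)}(u)=-\sum_{b=0}^{a-1}G^{(b)}(u)F^{(a-b)}(u)$, which comes from \eqref{eq:GF=I} together with $F^{(0)}=G^{(0)}=I$; weights add under multiplication, so each product again has weight $b+(a-b)=a$. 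This yields the displayed identity $G^{(a)}(u)=\sum_{b_1+\cdots+b_i=a}c_{b_1,\dots,b_i}\Xi^{(b_1-1)}(u)\cdots\Xi^{(b_i-1)}(u)$.

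Next I would compute the Taylor coefficients of $\Xi$. Expanding $C^j_{lk}(su)$ in $s$ in the definition $\Xi^j_k(s,u)=\sum_l C^j_{lk}(su)u^l$ gives
\[
\Xi^{(c)}(u)^j_k=\frac{1}{c!}\sum_{l=1}^{2n+1}\frac{d^c}{ds^c}C^j_{lk}(su)\Bigm|_{s=0}\,u^l ,
\]
so each entry of $\Xi^{(c)}$ is a sum of products of one $c$-th ray-derivative $\frac{d^c}{ds^c}C^j_{lk}(su)|_{s=0}$ of a structure function, which is homogeneous of degree $c$ in $u$, with one degree-one monomial $u^l$. Writing a product $\Xi^{(c_1)}(u)\cdots\Xi^{(c_k)}(u)$ entrywise and summing over the $k-1$ contracted indices, the $(j,k)$-entry becomes a sum of products of $k$ such ray-derivatives of structure functions and $k$ monomials $u^{l_1}\cdots u^{l_k}$, of total degree $\sum_q(c_q+1)=a$. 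Together with the previous step this is exactly the asserted form for $G^{(a)}(u)^j_k$ --- with the number $r$ of structure-function factors and the number $i$ of monomials both equal to $k$, and the degree condition of the statement satisfied.

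For the explicit low-degree formulas I would simply unwind the recursion. Equation \eqref{eq:F_Gamma} gives $F^{(1)}=-\tfrac12\Xi^{(0)}$, $F^{(2)}=\tfrac16(\Xi^{(0)})^2-\tfrac13\Xi^{(1)}$ and $F^{(3)}=-\tfrac1{24}(\Xi^{(0)})^3+\tfrac1{12}\Xi^{(0)}\Xi^{(1)}+\tfrac18\Xi^{(1)}\Xi^{(0)}-\tfrac14\Xi^{(2)}$, and then $G^{(a)}=-\sum_{b=0}^{a-1}G^{(b)}F^{(a-b)}$ gives $G^{(1)}=\tfrac12\Xi^{(0)}$ and $G^{(2)}=\tfrac1{12}(\Xi^{(0)})^2+\tfrac13\Xi^{(1)}$, while for $a=3$ the cubic $(\Xi^{(0)})^3$ terms cancel and one is left with a combination of $\Xi^{(0)}\Xi^{(1)}$, $\Xi^{(1)}\Xi^{(0)}$ and $\Xi^{(2)}$. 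Substituting the entry formula for $\Xi^{(c)}$ and writing the matrix products as index sums then produces the stated expressions. I expect the $a=3$ case to be the only delicate step: beyond keeping track of the rational coefficients, one has to match the two contributions $\Xi^{(0)}\Xi^{(1)}$ and $\Xi^{(1)}\Xi^{(0)}$ --- each a structure function times one of its ray-derivatives with one contraction, and with coefficients combining to $\tfrac18$ --- against the single index term displayed in the statement; everything else is routine bookkeeping.
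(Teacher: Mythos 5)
Your route is the same as the paper's: the structural claim is obtained by unwinding the recursions \eqref{eq:F_Gamma} and \eqref{eq:GF=I} into products of the Taylor coefficients $\Xi^{(c)}$, and the low-order formulas are read off from the same recursion. Your induction on the weight $\sum_q(c_q+1)$, the entry formula $\Xi^{(c)}(u)^j_k=\frac{1}{c!}\sum_l\frac{d^c}{ds^c}C^j_{lk}(su)\bigm|_{s=0}u^l$, and the explicit values $F^{(1)}=-\tfrac12\Xi^{(0)}$, $F^{(2)}=\tfrac16(\Xi^{(0)})^2-\tfrac13\Xi^{(1)}$, $F^{(3)}=-\tfrac1{24}(\Xi^{(0)})^3+\tfrac1{12}\Xi^{(0)}\Xi^{(1)}+\tfrac18\Xi^{(1)}\Xi^{(0)}-\tfrac14\Xi^{(2)}$, $G^{(1)}=\tfrac12\Xi^{(0)}$, $G^{(2)}=\tfrac1{12}(\Xi^{(0)})^2+\tfrac13\Xi^{(1)}$ all check out and reproduce the stated $a=1,2$ formulas.

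The one place your write-up does not close is exactly the step you defer as ``delicate.'' Carrying out your own recursion at $a=3$ gives
\[
G^{(3)}=\tfrac1{12}\,\Xi^{(0)}\Xi^{(1)}+\tfrac1{24}\,\Xi^{(1)}\Xi^{(0)}+\tfrac14\,\Xi^{(2)},
\]
(the $(\Xi^{(0)})^3$ terms indeed cancel), whereas the displayed formula in the proposition corresponds to $\tfrac18\,\Xi^{(0)}\Xi^{(1)}+\tfrac14\,\Xi^{(2)}$. The coefficients $\tfrac1{12}+\tfrac1{24}=\tfrac18$ do combine as you say, but the two ordered products have different index structures --- $C^j_{lp}(0)\frac{d}{ds}C^p_{mk}(su)|_{s=0}$ versus $\frac{d}{ds}C^j_{lp}(su)|_{s=0}\,C^p_{mk}(0)$ --- and $\Xi^{(0)}$, $\Xi^{(1)}$ do not commute in general, so ``substituting the entry formula'' does not by itself produce the single term displayed in the statement. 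You should either keep both ordered terms (which still proves the structural part of the proposition and is the honest form of $G^{(3)}$), or explicitly justify identifying the two orderings; as written, the matching is asserted rather than proved, and it is the only genuine gap in an otherwise correct argument that mirrors the paper's.
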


To establish Theorem \ref{thm:F-S}, 
it remains to show that 
\[
G^{(1)}(u)^{\alpha}_{2n+1}=-u^{n+\alpha}+O^2,\qquad
G^{(1)}(u)^{n+\alpha}_{2n+1}=u^{\alpha}+O^2
\]
for $\alpha=1,\ldots,n$. To this end, it is enough to show the following:

\begin{proposition}[\cite{dragomir-tomassini}]\label{prop:str_const}
For $\alpha,\beta=1,\ldots,n$ we have 
\[
C^{2n+1}_{\alpha\beta}=C^{2n+1}_{n+\alpha,n+\beta}=0,\qquad
C^{2n+1}_{\alpha,n+\beta}=-C^{2n+1}_{n+\alpha,\beta}=2\delta_{\alpha\beta}
\]
on $U_x$. 
\end{proposition}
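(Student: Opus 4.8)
The plan is to reduce the statement to the evaluation of $\theta$ on Lie brackets of the frame $\{\XH_j\}$, and then to read those brackets off from the torsion axioms of the Tanaka--Webster connection. The starting observation is that $\XH_1,\dots,\XH_{2n}$ are sections of $H$, on which $\theta$ vanishes, while $\theta(\XH_{2n+1})=\theta(T)=1$; hence, applying $\theta$ (extended $\mathbb{C}$-linearly to $\mathbb{C}TM$) to the defining identity $[\XH_j,\XH_k]=\sum_i C^i_{jk}\XH_i$ gives
\[
C^{2n+1}_{jk}=\theta([\XH_j,\XH_k]),\qquad j,k=1,\dots,2n.
\]
Writing $\XH_\alpha=\frac1{\sqrt2}(Z_\alpha+Z_{\overline\alpha})$ and $\XH_{n+\alpha}=-\frac{\kyosu}{\sqrt2}(Z_\alpha-Z_{\overline\alpha})$ and expanding by bilinearity of the bracket, everything then reduces to the four quantities $\theta([Z_\alpha,Z_\beta])$, $\theta([Z_{\overline\alpha},Z_{\overline\beta}])$, $\theta([Z_\alpha,Z_{\overline\beta}])$ and $\theta([Z_{\overline\alpha},Z_\beta])$.

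Next I would compute these four quantities from the torsion. The first defining property of $\nabla$ guarantees $\nabla_XY\in\Gamma^\infty(H)$ for $Y\in\Gamma^\infty(H)$, so after complexification $\nabla_{Z_A}Z_B\in\Gamma^\infty(\mathbb{C}H)$ and therefore $\theta(\nabla_{Z_A}Z_B)=0$ for all indices $A,B$. Inserting this into $T_\nabla(Z_A,Z_B)=\nabla_{Z_A}Z_B-\nabla_{Z_B}Z_A-[Z_A,Z_B]$ and applying $\theta$ yields $\theta([Z_A,Z_B])=-\theta(T_\nabla(Z_A,Z_B))$. Now \eqref{eqn.nabla.orth} gives $\theta([Z_\alpha,Z_\beta])=0$, hence $\theta([Z_{\overline\alpha},Z_{\overline\beta}])=0$ by conjugation, and \eqref{eqn.nabla.norm} gives $\theta([Z_\alpha,Z_{\overline\beta}])=-2\kyosu L_\theta(Z_\alpha,Z_{\overline\beta})$. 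To finish this step I would use $L_\theta(Z_\alpha,Z_{\overline\beta})=g_\theta(Z_\alpha,Z_{\overline\beta})=\delta_{\alpha\beta}$: the first equality is immediate from $g_\theta(X,Y)=d\theta(X,JY)$, $JZ_{\overline\beta}=-\kyosu Z_{\overline\beta}$ and $L_\theta(Z,W)=-\kyosu d\theta(Z,W)$, and the second is the orthonormality of $\{Z_\alpha\}$. This gives $\theta([Z_\alpha,Z_{\overline\beta}])=-2\kyosu\delta_{\alpha\beta}$, and $\theta([Z_{\overline\alpha},Z_\beta])=2\kyosu\delta_{\alpha\beta}$ by antisymmetry of the bracket.

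Finally I would substitute these values into the bilinear expansions. In $[\XH_\alpha,\XH_\beta]$ and $[\XH_{n+\alpha},\XH_{n+\beta}]$ the two nonzero contributions cancel, yielding $C^{2n+1}_{\alpha\beta}=C^{2n+1}_{n+\alpha,n+\beta}=0$; in $[\XH_\alpha,\XH_{n+\beta}]$ they reinforce, and a one-line computation gives $\theta([\XH_\alpha,\XH_{n+\beta}])=-\frac{\kyosu}{2}\cdot 4\kyosu\,\delta_{\alpha\beta}=2\delta_{\alpha\beta}$, hence $C^{2n+1}_{\alpha,n+\beta}=2\delta_{\alpha\beta}$ and $C^{2n+1}_{n+\alpha,\beta}=-C^{2n+1}_{\beta,n+\alpha}=-2\delta_{\alpha\beta}$. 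I do not expect a genuine obstacle here; the only delicate point is bookkeeping of the normalization constants — the factors $\frac1{\sqrt2}$, $\kyosu$, and in particular the interplay between $d\theta$, the Levi form $L_\theta$, and the constant $2\kyosu$ in \eqref{eqn.nabla.norm} — and it is precisely to sidestep any ambiguity in the $d\theta$-convention that I would route the argument through the torsion identity rather than through the Cartan formula for $d\theta$.
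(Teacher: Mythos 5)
Your proposal is correct and follows essentially the same route as the paper: both arguments extract the $T$-component of the commutators $[Z_\alpha,Z_\beta]$, $[Z_\alpha,Z_{\overline\beta}]$ from the torsion axioms \eqref{eqn.nabla.orth}, \eqref{eqn.nabla.norm} together with $L_\theta(Z_\alpha,Z_{\overline\beta})=\delta_{\alpha\beta}$, and then transfer to the real frame $\XH_j$ by bilinearity. The only difference is presentational — you apply $\theta$ to isolate the $T$-coefficient directly (and spell out why $L_\theta=g_\theta$ on the frame), whereas the paper records the full bracket expansions \eqref{eq:Z_bracket} including the Christoffel-symbol terms, which it reuses later; this does not affect correctness.
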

\begin{proof}
By \eqref{eqn.nabla.orth}, we have
\[
0=T_{\nabla}(Z_{\alpha},Z_{\beta})
=\nabla_{Z_{\alpha}}Z_{\beta}-\nabla_{Z_{\beta}}Z_{\alpha}-[Z_{\alpha},Z_{\beta}]. 
\]
This is rewritten using Christoffel symbols as
\[
[Z_{\alpha},Z_{\beta}]
=\sum_{\gamma=1}^{n}(\Gamma^{\gamma}_{\alpha\beta}-\Gamma^{\gamma}_{\beta\alpha})Z_{\gamma}. 
\]
Taking conjugate of this equality yields
\[
[Z_{\overline{\alpha}},Z_{\overline{\beta}}]
=\sum_{\gamma=1}^{n}(\Gamma^{\overline{\gamma}}_{\overline{\alpha}\overline{\beta}}
-\Gamma^{\overline{\gamma}}_{\overline{\beta}\overline{\alpha}})Z_{\overline{\gamma}}. 
\]
Next we have from \eqref{eqn.nabla.norm} that 
\[
2\kyosu L_{\theta}(Z_{\alpha},Z_{\overline{\beta}})T
=T_{\nabla}(Z_{\alpha},Z_{\overline{\beta}})
=\nabla_{Z_{\alpha}}Z_{\overline{\beta}}
-\nabla_{Z_{\overline{\beta}}}Z_{\alpha}-[Z_{\alpha},Z_{\overline{\beta}}]. 
\]
Together with $L_{\theta}(Z_{\alpha},Z_{\overline{\beta}})=\delta_{\alpha\beta}$, we have
\begin{align}
[Z_{\alpha},Z_{\overline{\beta}}]
&=
-\sum_{\gamma=1}^n \Gamma^{\gamma}_{\overline{\beta}\alpha}Z_{\gamma}
+\sum_{\gamma=1}^n \Gamma^{\overline{\gamma}}_{\alpha\overline{\beta}}Z_{\overline{\gamma}}
-2\sqrt{-1}\delta_{\alpha\beta}T, \label{eq:Z_bracket}\\
[Z_{\overline{\alpha}},Z_{\beta}]
&=
\sum_{\gamma=1}^n \Gamma^{\gamma}_{\overline{\alpha}\beta}Z_{\gamma}
-\sum_{\gamma=1}^n \Gamma^{\overline{\gamma}}_{\beta\overline{\alpha}}Z_{\overline{\gamma}}
+2\sqrt{-1}\delta_{\alpha\beta}T.\nonumber
\end{align}
The proposition follows by rewriting these equalities using definition of $\XH_j$. 
\end{proof}

Theorem \ref{thm:F-S} has been proved 
from Propositions \ref{prop:G_asymp} and \ref{prop:str_const}. 

\begin{remark}
We note that an idea using exponential maps associated with vector fields is 
also seen in Ben Arous \cite{benarous}. 

One of difference between \cite{benarous} and the present paper is 
the way in which exponential maps are used. 
In \cite{benarous}, exponential maps are used to transform the solution of an SDE 
associated with vector fields. 
On the other hand, we have represented the vector fields in the 
Folland-Stein coordinate which is obtained by exponential maps, 
then in the next section we will consider an SDE in this coordinate
and proceed in a perturbation argument taking advantage of 
Theorem \ref{thm:F-S}. 
\end{remark}

\section{Asymptotic expansion of the heat kernel}\label{sec:asymptotic expansion}
In this section, we consider the heat equation
\[
\frac{\pa}{\pa t}v=-\frac{1}{2}\Delta_b u, \quad
v(0,x)=f(x), \quad f\in C_b^{\infty}(M). 
\]
We assume that $M$ is compact. 
Then we have by \cite{kondo-taniguchi} that 
there is a function $p\in C^{\infty}((0,\infty)\times M\times M)$ such that 
\[
P_x(X(t)\in dy)=p(t,x,y)\psi(dy),\quad x,y\in M. 
\]
$p(t,x,y)$ is the heat kernel associated to $-\frac{1}{2}\Delta_b$. 

Now we state the main theorem of the paper: 

\begin{theorem}\label{thm:main}
For each $x\in M$, there exists constants $c^M_a(x)$ ($a=0,1,\ldots$) such that
\[
p(t,x,x)\sim t^{-n-1}\sum_{a=0}^{\infty}c^M_a(x)t^a\quad\text{as $t\downarrow 0$}. 
\]
Furthermore, 
\begin{enumerate}
\item $c^M_0(x)=c_0=
\frac{1}{(2\pi)^{n+1}}\int_{\mathbb{R}}(\frac{2\tau}{\sinh 2\tau})^n d\tau$, 
which depends only on $n$. 
\item For $a\ge1$, $c^M_a(x)$ can be written as an at most $2a$-th degree polynomial of 
at most $2a+2$-th derivatives of Christoffel symbols of $M$. 
\end{enumerate}
\end{theorem}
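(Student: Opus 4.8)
The plan is to exploit Watanabe's asymptotic expansion applied to the SDE defining the diffusion, but written in the Folland–Stein coordinates around $x$, where Theorem \ref{thm:F-S} gives a good expansion of the driving vector fields. First I would introduce the scaling parameter $\ep>0$ and consider the SDE on $M$ (in the chart $V_x$) obtained by rescaling the Brownian motion; since the sub-Laplacian is genuinely subelliptic, the correct parabolic scaling is anisotropic: the horizontal coordinates $u^1,\ldots,u^{2n}$ scale like $\ep$ and the missing direction $u^{2n+1}$ like $\ep^2$. Under this rescaling, Theorem \ref{thm:F-S} shows that the generator $\ep^2(-\Delta_b/2)$ turns into the Heisenberg sub-Laplacian plus a power series in $\ep$ whose coefficients are polynomials in derivatives of the structure functions $C^i_{jk}$ at $x$, hence — via the relations $[Z_\alpha,Z_\beta]=\sum(\Gamma^\gamma_{\alpha\beta}-\Gamma^\gamma_{\beta\alpha})Z_\gamma$ and \eqref{eq:Z_bracket} from Proposition \ref{prop:str_const} — polynomials in derivatives of the Christoffel symbols of $M$.

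Next I would run Watanabe's machinery: the rescaled solution $U^\ep_t$ admits an asymptotic expansion $U^\ep_t \sim U^0_t + \ep U^1_t + \ep^2 U^2_t + \cdots$ in the Watanabe (Malliavin–Sobolev) sense, where $U^0$ is the Heisenberg diffusion and each $U^k$ solves a linear SDE driven by the lower-order terms; here one uses that the leading operator (Heisenberg sub-Laplacian) satisfies Hörmander's condition so that the composition $\delta_0(U^\ep_1)$ makes sense as a generalized Wiener functional with an expansion, and $\mathbb{E}[\delta_0(U^\ep_1)]$ is the pull-back density $p(\ep^2,x,x)$ up to the Jacobian of $E_x$ (which is $1$ at $u=0$, and whose $\ep$-expansion contributes only further polynomials in the $C^i_{jk}$). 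Taking generalized expectation term by term yields
\[
p(\ep^2,x,x)\sim \ep^{-2n-2}\sum_{a=0}^\infty c^M_a(x)\,\ep^{2a},
\]
i.e. the claimed expansion with $t=\ep^2$ and leading exponent $-n-1$. The leading coefficient $c^M_0(x)=\mathbb{E}[\delta_0(U^0_1)]$ is the on-diagonal heat kernel of the Heisenberg sub-Laplacian at time $1$, which by the classical Gaveau–Hulanicki formula equals $\frac{1}{(2\pi)^{n+1}}\int_{\mathbb{R}}\bigl(\frac{2\tau}{\sinh 2\tau}\bigr)^n\,d\tau$ and depends only on $n$.

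For part (ii), I would track degrees. By Theorem \ref{thm:F-S}, the coefficient of $\ep^k$ in the expansion of the rescaled vector fields is a polynomial of degree $\le k$ in the coordinate variables $u$ whose polynomial coefficients involve derivatives of $C^i_{jk}$ of order $\le k$; correspondingly $c^M_a(x)$ is assembled from products of at most such data collected with total weight $2a$, giving a polynomial of degree $\le 2a$ in the Christoffel symbols and their derivatives, and a bookkeeping of how many differentiations can accumulate (two extra from the two factors of $\ep$ in $\ep^2\Delta_b$, or from the Jacobian term) shows the derivatives that appear are of order $\le 2a+2$. The main obstacle, and the step needing the most care, is \emph{not} the heat-kernel expansion itself but the uniform nondegeneracy: one must verify that the Malliavin covariance matrix of $U^\ep_1$ is uniformly nondegenerate in $\ep$ (its inverse has moments bounded uniformly for small $\ep$), so that $\delta_0(U^\ep_1)\in \widetilde{\mathbf{D}}^{-\infty}$ with an expansion; this requires a quantitative version of Hörmander's bracket condition that survives the perturbation, which one obtains from the explicit Heisenberg leading term together with the fact that the $\ep$-corrections are smooth and polynomially controlled — precisely the content of Theorem \ref{thm:F-S}.
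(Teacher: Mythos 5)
Your overall route is the same as the paper's: express the generator in the Folland--Stein coordinates via Theorem \ref{thm:F-S}, use the anisotropic scaling $u^j\mapsto\ep u^j$ ($j\le 2n$), $u^{2n+1}\mapsto\ep^2u^{2n+1}$, apply Watanabe's expansion to $\delta_0(U^{\ep}_1)$, establish uniform nondegeneracy of the Malliavin covariance from a uniform H\"ormander condition (the paper does this through Kusuoka--Stroock), and identify the leading term with Gaveau's Heisenberg density. Two steps, however, are asserted rather than argued, and both are needed to get the theorem as stated.

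First, you pass directly from Watanabe's expansion to $p(\ep^2,x,x)\sim\ep^{-2n-2}\sum_a c^M_a(x)\ep^{2a}$, i.e.\ you claim only even powers of $\ep$ occur. Watanabe's theorem by itself yields an expansion $\ep^{-2n-2}(d_0+d_1\ep+d_2\ep^2+\cdots)$ in \emph{all} integer powers of $\ep$, and without showing $d_A=0$ for odd $A$ you would only obtain an expansion of $p(t,x,x)$ in powers of $t^{1/2}$, not the integer-power expansion claimed. The paper kills the odd terms by the symmetry $(B^1,\ldots,B^{2n})\mapsto(-B^1,\ldots,-B^{2n})$, under which $\mathbb{X}_1$ (the Brownian components together with the stochastic areas $S^{\alpha}$) keeps its law while each term contributing to $d_A$ picks up the factor $(-1)^A$; some such parity argument must be supplied.

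Second, the identification of $p(t,x,x)$ with the density of a coordinate process is not just a Jacobian matter: the diffusion on $M$ is not a process on $\mathbb{R}^{2n+1}$ and can leave the chart $V_x$. One needs a localization statement (the paper's Lemma \ref{lem:localization}, following \cite[Section V.10]{i-w}): the on-diagonal heat kernel of the minimal diffusion generated by $\mathcal{L}=\frac{1}{2}\sum_{j=1}^{2n}\XH_j^2+\XH_0$ on a relatively compact neighborhood differs from $p(t,x,x)$ by $O(e^{-c/t})$, which is negligible at every polynomial order; only after this reduction may one work with the SDE in the Folland--Stein chart. Finally, your degree count in part (ii) is heuristic (``two extra factors of $\ep$''); the clean bound --- degree at most $2a$ in derivatives of Christoffel symbols of order at most $2a+2$ --- falls out of the explicit coefficient formula of Proposition \ref{prop:d_A}, where the constraint $\lVert J_b\rVert\le A+\lVert I\rVert-2(a-1)\le A+2$ with $A=2a$ does the bookkeeping, and where one also uses that $(\XH_{J_b}u^{i_b})(0)$ is unaffected by the $O^{\lVert J_b\rVert+1}$ remainders in Theorem \ref{thm:F-S}; this should be made explicit.
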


To prove the theorem, we first note that it is sufficient to 
consider the problem locally. 
To be precise, let $\{Z_{\alpha}\}_{\alpha=1,\ldots,n}$ be a local orthonormal frame
for $T_{1,0}$ on a relatively compact open neighborhood $V$ of $x$. 
Let $\XH_j$, $j=1,\ldots,2n$ be as in Section \ref{sec.Folland-Stein} and 
\[
\XH_0
=-\sum_{\alpha,\beta=1}^n
(\Gamma^{\alpha}_{\overline{\beta}\beta}Z_{\alpha}
+\Gamma^{\overline{\alpha}}_{\beta\overline{\beta}}Z_{\overline{\alpha}})
=-\sqrt{2}\sum_{\alpha,\beta=1}^n
(\Re\Gamma^{\alpha}_{\overline{\beta}\beta}\XH_{\alpha}
-\Im\Gamma^{\alpha}_{\overline{\beta}\beta}\XH_{n+\alpha}). 
\]

Let $\{((U_t)_{t\ge0},\widetilde{P}_y);y\in V\}$ be the minimal diffusion process on $V$ 
generated by 
\[
\mathcal{L}=\frac{1}{2}\sum_{j=1}^{2n}\XH_j^2+\XH_0
\]
and  $\widetilde{p}$ be the density function of $\widetilde{P}_y$ with respect to $\psi$. 
Repetition of the argument in \cite[Section V.10]{i-w} yields the following:

\begin{lemma}\label{lem:localization}
There exist constants $c_1,c_2>0$ such that 
$\lvert p(t,x,x)-\widetilde{p}(t,x,x)\rvert\le c_1\exp(-\frac{c_2}{t})$, $t>0$. 
\end{lemma}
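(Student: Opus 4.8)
The plan is to reduce the comparison of the two heat kernels to a probabilistic coupling/stopping-time argument, exactly as in the classical localization of heat kernels for diffusions on manifolds. First I would recall the probabilistic meaning of the two densities: $p(t,x,\cdot)$ is the transition density of the globally defined diffusion $\mathbb{X}$ on $M$ generated by $-\Delta_b/2$, while $\widetilde{p}(t,x,\cdot)$ is the transition density of the \emph{minimal} (i.e. killed upon exiting $V$) diffusion on the relatively compact chart $V$ generated by the same operator written in local coordinates, namely $\mathcal{L}=\tfrac12\sum_{j=1}^{2n}\XH_j^2+\XH_0$. Because the generators agree on $V$ and the SDE defining $\mathbb{X}$ has a unique strong solution, the two processes started at $x$ can be realized on the same probability space (driven by the same Brownian motion) so that they coincide up to the first exit time $\sigma_V$ of the chart $V$. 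Then for any bounded Borel $f$ supported in a small ball $V'\Subset V$ around $x$,
\[
E_x[f(X(t))]-\widetilde E_x[f(U_t)]
=E_x\bigl[f(X(t))\mathbf{1}_{\{\sigma_V\le t\}}\bigr]
-E_x\bigl[f(U_t)\mathbf{1}_{\{\sigma_V\le t\}}\bigr],
\]
so everything is controlled by $P_x(\sigma_V\le t)$ and the analogous quantity for $U$.

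Next I would establish the small-time exit estimate $P_x(\sigma_V\le t)\le c_1\exp(-c_2/t)$, and likewise for the process $U_t$ up to its own exit time. This is where the sub-Riemannian nature enters: the vector fields $\XH_1,\dots,\XH_{2n}$ together with their brackets span $TM$ (they generate $T$ through $[\XH_\alpha,\XH_{n+\alpha}]$), so the diffusion is hypoelliptic and the standard Gaussian-type large-deviation upper bound applies. Concretely I would either invoke the general Gaussian upper bound for hypoelliptic heat kernels, or argue directly: pick a smooth function $h$ on $M$ with $h(x)=0$, $h\ge \delta>0$ outside $V'$, and $\Gamma(h):=\sum_{j=1}^{2n}(\XH_j h)^2\le 1$ (such an $h$ exists because the control/Carnot--Carathéodory distance is finite and continuous); then by Itô's formula $M_t^\lambda:=\exp(\lambda h(X(t))-\tfrac{\lambda^2}{2}\int_0^t \Gamma(h)(X(s))\,ds-\lambda\int_0^t \XH_0 h(X(s))\,ds)$ is a supermartingale, and on the compact $\overline V$ the drift term $\XH_0 h$ is bounded, so an exponential Markov/Chebyshev estimate followed by optimizing in $\lambda\sim 1/t$ gives $P_x(\sup_{s\le t}h(X(s))\ge\delta)\le c_1 e^{-c_2/t}$. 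Since $\{\sigma_V\le t\}\subset\{\sup_{s\le t}h(X(s))\ge\delta\}$ this yields the claim; the same computation works verbatim for $U_t$ on $V$ since the generators coincide there.

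Having both exit estimates, I would conclude by taking $f$ to be a smooth approximation of a point mass at $x$: for fixed $t>0$, $p(t,x,x)$ and $\widetilde p(t,x,x)$ are obtained as the limit of $\int f_k(y)p(t,x,y)\psi(dy)$ and $\int f_k(y)\widetilde p(t,x,y)\psi(dy)$ with $f_k$ supported in shrinking balls around $x$; subtracting and using the identity above bounds the difference of the smoothed kernels by $\|f_k\|_\infty P_x(\sigma_V\le t)$ plus the analogous term for $U$, both $\le c_1\|f_k\|_\infty e^{-c_2/t}$. To pass to the pointwise limit cleanly one uses the a priori smoothness of both kernels together with a parabolic interior estimate (on $(t/2,t)\times V'\times V'$, where the difference $p-\widetilde p$ solves the hypoelliptic heat equation in each variable) to upgrade the $L^1$-type bound on $p-\widetilde p$ to a bound on its sup norm over a neighborhood of $(t,x,x)$; this gives $|p(t,x,x)-\widetilde p(t,x,x)|\le c_1 e^{-c_2/t}$ after relabeling the constants. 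The main obstacle is this last upgrade from integrated to pointwise bounds: one must invoke the hypoelliptic (sub-elliptic) parabolic regularity theory for $\mathcal{L}$ — equivalently cite the interior estimates already used in \cite{kondo-taniguchi} to produce the $C^\infty$ kernel — rather than ordinary parabolic Schauder theory, since $\mathcal L$ is only subelliptic; everything else is a routine adaptation of \cite[Section V.10]{i-w}.
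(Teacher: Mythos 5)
Your route is genuinely different from the paper's. The paper disposes of this lemma in one line by ``repetition of the argument in [Ikeda--Watanabe, Section V.10]'', i.e.\ the Malliavin-calculus localization: both kernels are written as generalized expectations of $\delta_x$ composed with the respective solutions, a smooth Wiener-functional cutoff $\chi$ supported on paths that stay in $V$ is inserted, and the difference is estimated by pairing the negative-order Sobolev norm of $\delta_x(\,\cdot\,)$ (polynomial in $1/t$, by uniform nondegeneracy) against the positive-order Sobolev norm of $1-\chi$, which is $O(e^{-c/t})$ by the exit-time estimate. Your proposal replaces this by a purely probabilistic/PDE argument: couple the global and minimal diffusions up to $\sigma_V$, get the exponential exit estimate by an exponential supermartingale (this part is fine; with bounded coefficients on $\overline V$ you do not even need the Carnot--Carath\'eodory construction), deduce the $L^1_y$ bound $\int_{V'}|p(t,x,y)-\widetilde p(t,x,y)|\,\psi(dy)\le c_1e^{-c_2/t}$, and upgrade to a pointwise bound at $y=x$ by hypoelliptic interior regularity for $\partial_t-\mathcal{L}$. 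What your approach buys is independence from the Malliavin machinery at this stage (it only needs hypoellipticity, already guaranteed by Lemma \ref{lem:Hormander}); what the paper's citation buys is that all quantitative $t$-dependence is already packaged in the Sobolev-norm estimates of \cite{i-w}, so no separate regularity input is needed.

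The one point you must actually prove, and currently only gesture at, is the $t$-dependence of the constant in the interior estimate: you apply it on a time slab $(t/2,t)\times V'$ whose thickness shrinks with $t$, so the constant in $\sup\le C\,\|\cdot\|_{L^1}$ blows up as $t\downarrow0$. The conclusion survives only if that blow-up is at most polynomial in $1/t$, which requires an argument --- e.g.\ the anisotropic parabolic scaling $(s,u)\mapsto(ts,\lambda_{\sqrt t}(u))$ adapted to the H\"ormander structure, under which the rescaled operators have uniformly bounded coefficients and uniformly nondegenerate brackets, or a quantitative subelliptic estimate with explicit dependence on the domain. Without this, ``after relabeling the constants'' is not justified. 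Alternatively you can avoid the regularity upgrade altogether: by the strong Markov property, for $y\in V$ one has $p(t,x,y)-\widetilde p(t,x,y)=E_x\bigl[p(t-\sigma_V,X(\sigma_V),y)\,\mathbf{1}_{\{\sigma_V<t\}}\bigr]$, so at $y=x$ it suffices to bound $p(s,z,x)$ for $z\in\partial V$, $0<s\le t$ (off-diagonal, hence bounded, again by hypoellipticity or by a crude polynomial on-diagonal bound) and multiply by $P_x(\sigma_V<t)\le c_1e^{-c_2/t}$; this is closer in spirit to the classical localization and needs no shrinking-domain estimate.
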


By Lemma \ref{lem:localization}, 
for a proof of Theorem \ref{thm:main} it suffices to show the theorem replacing $p$ by
the heat kernel associated with $\mathcal{L}$.
Moreover, by Theorem \ref{thm:F-S} we can assume that $\XH_j$'s are 
vector fields on $\mathbb{R}^{2n+1}$ which satisfy the condition of Theorem \ref{thm:F-S}
and $x=0\in\mathbb{R}^{2n+1}$. 
For simplicity we write $p$ by the heat kernel associated with $\mathcal{L}$ in the sequel. 

We write 
\[
\XH_j=X_j+A_j,\quad j=1,\ldots,2n,
\]
where
\[
X_{\alpha}=\frac{\pa}{\pa u^{\alpha}}-u^{n+\alpha}\frac{\pa}{\pa u^{2n+1}}, \qquad
X_{n+\alpha}=\frac{\pa}{\pa u^{n+\alpha}}+u^{\alpha}\frac{\pa}{\pa u^{2n+1}}, \quad
\alpha=1,\ldots,n
\]
and
\[
A_j=\sum_{i=1}^{2n+1}a^i_j\frac{\pa}{\pa u^i}, \quad j=1,\ldots,2n. 
\]
We also write
\[
\XH_0=\sum_{i=1}^{2n+1}b^i\frac{\pa}{\pa u^i}. 
\]
For later use, for $\ep\in(0,1]$ we define 
\begin{equation}\label{eq:XHepj}
\XH^{\ep}_j=X_j+A^{\ep}_j,\qquad 
A^{\ep}_j=\sum_{i=1}^{2n+1}a^{i,\ep}_j\frac{\pa}{\pa u^i}, \quad i=1,\ldots,2n,
\end{equation}
\begin{equation}\label{eq:XHep0}
\XH^{\ep}_0=\sum_{i=1}^{2n+1}b^{i,\ep}\frac{\pa}{\pa u^i}
\end{equation}
with
\[
a^{i,\ep}_j(u)=a^i_j(\lambda_{\ep}(u)), \qquad
a^{2n+1,\ep}_j(u)=\ep^{-1}a^{2n+1}_j(\lambda_{\ep}(u)), \quad
i=1,\ldots,2n,\quad j=1,\ldots,2n,
\]
\[
b^{i,\ep}(u)=\ep b^i(\lambda_{\ep}(u)), \qquad
b^{2n+1,\ep}(u)=b^{2n+1}(\lambda_{\ep}(u)), \quad i=1,\ldots,2n,
\]
where $\lambda_{\ep}(u)=(\ep u^1,\ldots,\ep u^{2n},\ep^2u^{2n+1})$ for
$u=(u^1,\ldots,u^{2n+1})$. 
Note that 
\[
\XH^{\ep}_j=\XH_j, \quad j=1,\ldots,2n+1 \quad \text{if $\ep=1$}. 
\]
Then we have from Theorem \ref{thm:F-S} that
\begin{equation}
a^{i,\ep}_j(0)=0, \quad i=1,\ldots,2n+1, \quad j=1,\ldots,2n, \quad \ep\in(0,1],
\label{eq:aij}
\end{equation}
\begin{equation}
\frac{\pa a^{2n+1,\ep}_j}{\pa u^k}(0)=0, \quad j=1,\ldots,2n, 
\quad k=1,\ldots,2n+1, \quad \ep\in(0,1]. \label{eq:aij'}
\end{equation}

To state the next lemma, 
let $S^{2n}=\{\xi\in\mathbb{R}^{2n+1};\lvert\xi\rvert=1\}$ be the $2n$-dimensional sphere and 
denote the standard inner product in $\mathbb{R}^{2n+1}$ by 
$\langle ,\rangle_{\mathbb{R}^{2n+1}}$.
We identify $T_0\mathbb{R}^{2n+1}$ and $\mathbb{R}^{2n+1}$ by
$\sum_{j=1}^{2n+1}x^j(\frac{\pa}{\pa u^j})_0\mapsto (x^j)_{j=1,\ldots,2n+1}$.

\begin{lemma}\label{lem:Hormander}
For $\alpha=1,\ldots,n$, 
we have 
\[
\inf_{\ep\in(0,1]}\inf_{\xi\in S^{2n}}
\biggl(
\sum_{j=1}^{2n}\langle(\XH^{\ep}_j)_0,\xi\rangle_{\mathbb{R}^{2n+1}}^2
+\langle[\XH^{\ep}_{\alpha},\XH^{\ep}_{n+\alpha}]_0,\xi\rangle_{\mathbb{R}^{2n+1}}^2
\biggr)>0,
\]
where
the tangent vector at $0$ of a tangent vector field $V$ is denoted by $V_0$. 
In particular, 
the set $\{(\XH_j)_0;i=1,\ldots,2n\}\cup\{[\XH_{\alpha},\XH_{n+\alpha}]_0\}$ spans
$T_0\mathbb{R}^{2n+1}$ over $\mathbb{R}$, 
\end{lemma}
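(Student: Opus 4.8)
\emph{Proof proposal.} The plan is to compute, for each fixed $\al$, the tangent vectors $(\XH^\ep_j)_0$ and $[\XH^\ep_\al,\XH^\ep_{n+\al}]_0$ explicitly, and then to obtain the uniform positivity by a short compactness argument on $\ep\in[0,1]$. First, since $a^{i,\ep}_j(0)=0$ by \eqref{eq:aij} and the coordinate functions $u^{n+\al},u^\al$ vanish at $0$, we have $(\XH^\ep_j)_0=(\pa/\pa u^j)_0$ for every $j=1,\dots,2n$ and every $\ep\in(0,1]$; hence, under the identification of $T_0\mathbb{R}^{2n+1}$ with $\mathbb{R}^{2n+1}$, the first of the two terms in the lemma equals $\sum_{j=1}^{2n}(\xi^j)^2=|\xi'|^2$, independently of $\ep$, where $\xi'$ denotes the projection of $\xi=(\xi^1,\dots,\xi^{2n+1})$ to the first $2n$ coordinates.

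Next I would compute $[\XH^\ep_\al,\XH^\ep_{n+\al}]_0$. Writing $\XH^\ep_j=X_j+A^\ep_j$ and using bilinearity, expand the bracket as $[X_\al,X_{n+\al}]+[X_\al,A^\ep_{n+\al}]+[A^\ep_\al,X_{n+\al}]+[A^\ep_\al,A^\ep_{n+\al}]$. A direct computation gives the Heisenberg relation $[X_\al,X_{n+\al}]=2\,\pa/\pa u^{2n+1}$ (consistent with $C^{2n+1}_{\al,n+\al}=2$ from Proposition \ref{prop:str_const}). In each of the other three brackets, evaluating at $0$ and using \eqref{eq:aij} annihilates every term carrying an undifferentiated factor $a^{i,\ep}_j(0)$, and what survives are the first derivatives $\pa a^{i,\ep}_{n+\al}/\pa u^\al(0)$ and $-\pa a^{i,\ep}_\al/\pa u^{n+\al}(0)$, whose $i=2n+1$ components vanish by \eqref{eq:aij'}. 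Hence
\[
[\XH^\ep_\al,\XH^\ep_{n+\al}]_0=v^\ep+2\Bigl(\tfrac{\pa}{\pa u^{2n+1}}\Bigr)_0,\qquad
v^\ep\in\operatorname{span}\Bigl\{\bigl(\tfrac{\pa}{\pa u^i}\bigr)_0:i=1,\dots,2n\Bigr\}.
\]
Moreover $a^{i,\ep}_j(u)=a^i_j(\lambda_\ep(u))$ for $i\le 2n$, so by the chain rule each component of $v^\ep$ is $\ep$ times a fixed first derivative of the unscaled coefficients at $0$ (indeed $\ep\bigl(\pa a^i_{n+\al}/\pa u^\al-\pa a^i_\al/\pa u^{n+\al}\bigr)(0)$); thus $\sup_{\ep\in(0,1]}|v^\ep|<\infty$ and $v^\ep\to0$ as $\ep\downarrow0$, so $\ep\mapsto v^\ep$ extends continuously to $[0,1]$ with $v^0=0$.

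It then remains to bound $Q^\ep(\xi):=|\xi'|^2+\bigl(\langle v^\ep,\xi'\rangle_{\mathbb{R}^{2n+1}}+2\xi^{2n+1}\bigr)^2$ away from zero, uniformly over $\ep\in(0,1]$ and $\xi=(\xi',\xi^{2n+1})\in S^{2n}$. I would argue by contradiction: if $Q^{\ep_k}(\xi_k)\to0$ for some sequences, pass to a subsequence with $\ep_k\to\ep_*\in[0,1]$ and $\xi_k\to\xi_*\in S^{2n}$; continuity of $\ep\mapsto v^\ep$ gives $Q^{\ep_*}(\xi_*)=0$, which forces $\xi'_*=0$, hence $(\xi^{2n+1}_*)^2=1$, and then the second term equals $4\ne0$, a contradiction. (One could also avoid compactness, using $(a+b)^2\ge\tfrac12a^2-b^2$ with $|v^\ep|\le C$ and splitting according to whether $|\xi'|^2$ is below or above a threshold $\delta(C)$.) The final assertion of the lemma follows at once by taking $\ep=1$: a nonzero vector orthogonal to $(\XH_j)_0$ for $j=1,\dots,2n$ and to $[\XH_\al,\XH_{n+\al}]_0$ would, after normalization, be a $\xi\in S^{2n}$ with $Q^1(\xi)=0$.

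As for the main obstacle: the bracket expansion and chain-rule bookkeeping are routine, and the structural input---that the weight-two coordinate $u^{2n+1}$ enters $\XH_\al$ only through the Heisenberg term $-u^{n+\al}\,\pa/\pa u^{2n+1}$ at leading order---has already been established in Theorem \ref{thm:F-S} and recorded in \eqref{eq:aij}--\eqref{eq:aij'}. The one point genuinely needing care is the uniformity over the non-compact interval $(0,1]$: it is exactly \eqref{eq:aij'} that lets the rescaling factor $\ep^{-1}$ in $a^{2n+1,\ep}_j$ be absorbed cleanly, so that $v^\ep$ stays bounded and tends to $0$, which is what makes the compactness argument above legitimate.
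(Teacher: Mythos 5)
Your proposal is correct and follows essentially the same route as the paper: you identify $(\XH^{\ep}_j)_0=(\pa/\pa u^j)_0$ from \eqref{eq:aij}, compute $[\XH^{\ep}_{\alpha},\XH^{\ep}_{n+\alpha}]_0=2(\pa/\pa u^{2n+1})_0$ plus an $\ep$-scaled horizontal correction killed in the $u^{2n+1}$-direction by \eqref{eq:aij'}, and conclude by continuity and compactness of $S^{2n}\times[0,1]$. Your explicit contradiction/threshold argument just spells out the paper's one-line observation that the resulting quadratic form is a nowhere-vanishing continuous function on a compact set.
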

\begin{proof}
We have $(\XH_j)_0=(\frac{\pa}{\pa u^j})_0$ for $i=1,\ldots,2n$ by \eqref{eq:aij}. 
if we write
\[
\eta^i_{\alpha}=
\frac{\pa a^i_{n+\alpha}}{\pa u^{\alpha}}(0)
-\frac{\pa a^i_{\alpha}}{\pa u^{n+\alpha}}(0), \quad i=1,\ldots,2n,
\]
we have
\begin{align*}
[\XH_{\alpha},\XH_{n+\alpha}]_0
&=2\biggl(\frac{\pa}{\pa u^{2n+1}}\biggr)_0
+\ep\sum_{i=1}^{2n}\eta^i_{\alpha}\biggl(\frac{\pa}{\pa u^i}\biggr)_0\\
&\equiv 2\biggl(\frac{\pa}{\pa u^{2n+1}}\biggr)_0 \quad 
\bmod \mathrm{span}_{\mathbb{R}}\{(\XH_j)_0;j=1,\ldots,2n\}
\end{align*}
by \eqref{eq:aij'}. Then 
\[
\sum_{j=1}^{2n}\langle(\XH^{\ep}_j)_0,\xi\rangle_{\mathbb{R}^{2n+1}}^2
+\langle[\XH^{\ep}_{\alpha},\XH^{\ep}_{n+\alpha}]_0,\xi\rangle_{\mathbb{R}^{2n+1}}^2
=\sum_{j=1}^{2n}(\xi^j)^2+
\biggl(
2\xi^{2n+1}+\ep\sum_{j=1}^{2n}\eta^j_{\alpha}\xi^j
\biggr)^2
\]
can be seen as a continuous function on the compact set
$S^{2n}\times[0,1]\ni(\xi,\ep)$ which never attains zero. 
The lemma follows. 
\end{proof}

For $\ep>0$, let $\{U^{\ep}_t\}_{t\ge0}$ be the solution of SDE
\[
dU^{\ep}_t
=\ep\sum_{j=1}^{2n}\XH_j(U^{\ep}_t)\circ dB^j_t+\ep^2\XH_0(U^{\ep}_t)dt, 
\quad U^{\ep}_0=0, 
\]
where $(B^1_t,\ldots,B^{2n}_t)_{t\ge0}$ is a Brownian motion of $2n$ dimension.
Then $\{U^1_t\}_{t\ge0}$ is a diffusion process with generator $\mathcal{L}$. 

Since $\XH_0,\XH_1,\ldots,\XH_{2n}$ satisfies the H\"{o}rmander condition
by Lemma \ref{lem:Hormander}, 
we have $U^{\ep}_t\in\mathbb{D}^{\infty}(\mathbb{R}^{2n+1})$ , i.e.
$U^{\ep}_t$ is nondegenerate in the sense of the Malliavin calculus. 
Moreover, $U^{\ep}_t$ and $U^{1}_{\ep^2 t}$ have the same law. 
Then we can define $\delta_0(U^{\ep}_t)$ as a generalized Wiener functionals and 
we have
\[
p(\ep^2,0,0)=\mathbb{E}[\delta_0(U^{\ep}_t)]=\mathbb{E}[\delta_0(U^{1}_{\ep^2 t})], 
\]
where the expectation is denoted by $\mathbb{E}$ and $\delta_0$ is the 
Dirac delta function at $0\in\mathbb{R}^{2n+1}$. 

To investigate the asymptotic behavior of $\mathbb{E}[\delta_0(U^{1}_{\ep^2 t})]$, 
we first introduce the multiple Wiener integral: 

\begin{definition}
For a multiple index $J=(j_1,\ldots,j_b)$, $j_1,\ldots,j_b\in\{0,1,\ldots,2n\}$, 
define $B^J_t$ by
\[
B^{(j_1)}_t=B^{j_1}_t, \qquad
B^{(j_1,\ldots,j_b)}_t=\int_0^t B^{(j_1,\ldots,j_{b-1})}_s\circ dB^{j_b}_s\quad
\text{for $b\ge2$}, 
\]
where we set $B^0_t=t$. 
\end{definition}

By applying the It\^{o} formula repetitively, we have the following expansion 
formula (cf. \cite{i-w}): 

\begin{proposition}
For $i=1,\ldots,2n+1$, $a=1,2,\ldots$ and $\ep>0$, we have
\[
U^{\ep,i}_t
=\sum_{a=1}^A \ep^a\sum_{\lVert J\rVert=a}(\XH_Ju^i)(0)B^J_t
+\ep^{A+1}\Phi^{A,\ep}_t, 
\]
where we write $U^{\ep}_t=(U^{\ep,1}_t,\ldots,U^{\ep,2n+1}_t)$ and
\[
\lVert J\rVert=b+\#\{\nu;j_{\nu}=0\}, \qquad 
\XH_J=\XH_{j_1}\cdots\XH_{j_b}
\]
for $J=(j_1,\ldots,j_b)$, $j_1,\ldots,j_b\in\{0,1,\ldots,2n+1\}$. 

Moreover, we have
$\Phi^{A,\ep}_t\in\mathbb{D}^{\infty}(\mathbb{R}^{2n+1})$ and 
\[
\sup_{\ep\in(0,1)}\mathbb{E}\biggl[\sup_{t\in[0,1]}\lvert \Phi^{A,\ep}_t\rvert^p\biggr]<\infty,
\quad p\in(1,\infty). 
\]
\end{proposition}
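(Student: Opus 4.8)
\emph{Proof strategy.} The plan is to derive the expansion by iterating the Stratonovich chain rule, i.e.\ a stochastic Taylor expansion as in \cite[Chapter~V]{i-w}, and then to control the remainder by standard moment estimates for the solution of the SDE. Set $B^0_t=t$ and, for $j=0,1,\dots,2n$, write $\ep^{[j]}=\ep^2$ if $j=0$ and $\ep^{[j]}=\ep$ otherwise, so that the SDE defining $U^{\ep}_t$ reads $dU^{\ep}_t=\sum_{j=0}^{2n}\ep^{[j]}\XH_j(U^{\ep}_t)\circ dB^j_t$ and, for every smooth $f$, the Stratonovich chain rule gives $df(U^{\ep}_t)=\sum_{j=0}^{2n}\ep^{[j]}(\XH_j f)(U^{\ep}_t)\circ dB^j_t$. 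Applying this with $f=u^i$, then re-expanding each integrand $(\XH_j u^i)(U^{\ep}_{\cdot})$ by the same identity, and continuing, produces branches indexed by multi-indices $J=(j_1,\dots,j_b)$ with $j_\nu\in\{0,\dots,2n\}$: each branch carries the scalar factor $\prod_{\nu}\ep^{[j_\nu]}=\ep^{\lVert J\rVert}$ --- which is exactly why $\lVert J\rVert$ counts each zero twice --- the completed term $\ep^{\lVert J\rVert}(\XH_J u^i)(0)B^J_t$, and an unexpanded iterated integral whose integrand is $(\XH_{(k,J)}u^i)(U^{\ep}_{\cdot})$. Expanding a branch as long as $\lVert J\rVert\le A$ and freezing it otherwise, the procedure terminates after at most $A$ rounds, since each step raises $\lVert J\rVert$ by $1$ or $2$; the frozen branches satisfy $A+1\le\lVert J\rVert\le A+2$. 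As $u^i(0)=0$, the completed terms sum to $\sum_{a=1}^{A}\ep^a\sum_{\lVert J\rVert=a}(\XH_J u^i)(0)B^J_t$, and the frozen branches form a finite sum of terms $\ep^{\lVert J\rVert}I^{\ep}_J(t)$ with
\[
I^{\ep}_J(t)=\int_{0<\sigma_1<\cdots<\sigma_b<t}(\XH_J u^i)(U^{\ep}_{\sigma_1})\circ dB^{j_1}_{\sigma_1}\circ\cdots\circ dB^{j_b}_{\sigma_b}.
\]
Factoring out $\ep^{A+1}$ (the leftover power $\lVert J\rVert-A-1\in\{0,1\}$ is harmless since $\ep\le1$) defines $\Phi^{A,\ep}_t$.

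It then remains to check $\Phi^{A,\ep}_t\in\mathbb{D}^{\infty}(\mathbb{R}^{2n+1})$ and the uniform moment bound. Two preliminary reductions help. First, by the localization of Section~\ref{sec:asymptotic expansion} we may assume the coefficients of $\XH_0,\dots,\XH_{2n}$, and hence every function $\XH_J u^i$, to be smooth with all derivatives bounded on $\mathbb{R}^{2n+1}$ (changing them away from a neighbourhood of $0$ alters $p(t,0,0)$ only by an exponentially small amount, as in Lemma~\ref{lem:localization}). Second, since the coefficients $\ep^{[j]}\XH_j$ are bounded uniformly in $\ep\in(0,1]$, the Burkholder--Davis--Gundy inequality and Gronwall's lemma give $\sup_{\ep\in(0,1]}\mathbb{E}\bigl[\sup_{t\le1}\lvert U^{\ep}_t\rvert^p\bigr]<\infty$ for all $p\in(1,\infty)$; differentiating the SDE, the Malliavin derivatives of $U^{\ep}_t$ of every order solve linear SDEs with coefficients built from the bounded derivatives of the $\XH_j$'s, which yields the analogous uniform bounds for $\sup_{t\le1}\lVert D^m U^{\ep}_t\rVert^p$. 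In particular $U^{\ep}_t\in\mathbb{D}^{\infty}$, so each $(\XH_J u^i)(U^{\ep}_{\cdot})$ lies in $\mathbb{D}^{\infty}$ with uniformly bounded moments (and Malliavin-derivative moments) of all orders.

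With these in hand, each $I^{\ep}_J(t)$ is estimated by induction on the number of layers $b$: converting the innermost Stratonovich integral to an It\^o integral (the correction term involves $\XH_{j_1}\XH_J u^i$ evaluated along $U^{\ep}$, a function of the same type) and applying the Burkholder--Davis--Gundy inequality again bounds $\sup_{s\le1}$ of that integral in every $L^p$, uniformly in $\ep$, as well as its Malliavin derivatives; this process then serves as the integrand at the next layer, and one repeats $b$ times. Summing over the finitely many frozen $J$ gives $\Phi^{A,\ep}_t\in\mathbb{D}^{\infty}(\mathbb{R}^{2n+1})$ together with $\sup_{\ep\in(0,1)}\mathbb{E}\bigl[\sup_{t\in[0,1]}\lvert\Phi^{A,\ep}_t\rvert^p\bigr]<\infty$ for every $p\in(1,\infty)$; alternatively, membership in $\mathbb{D}^{\infty}$ is immediate from $\Phi^{A,\ep}_t=\ep^{-(A+1)}\bigl(U^{\ep,i}_t-\sum_{1\le\lVert J\rVert\le A}\ep^{\lVert J\rVert}(\XH_J u^i)(0)B^J_t\bigr)$ since $U^{\ep,i}_t$ and the $B^J_t$ belong to $\mathbb{D}^{\infty}$.

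The main obstacle is this last, uniform-in-$\ep$ step: keeping the factor $\ep^{A+1}$ cleanly isolated despite the mixed weights $\ep$ (Brownian directions) and $\ep^2$ (drift direction), arranging that the coefficient vector fields may be taken bounded without affecting the short-time asymptotics, and pushing the $L^p$ estimates through every Stratonovich-to-It\^o conversion and every integration layer simultaneously for the functional and all of its Malliavin derivatives. The rest is bookkeeping of the iterated It\^o formula.
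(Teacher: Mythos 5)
Your proposal is essentially the paper's own argument: the paper gives no detailed proof, stating only that the expansion follows ``by applying the It\^{o} formula repetitively (cf.\ Ikeda--Watanabe)'', and your iterated Stratonovich chain rule with the weight $\lVert J\rVert$ counting the drift index twice, followed by standard BDG/Gronwall and Malliavin-derivative estimates for the frozen remainder branches, is precisely the stochastic Taylor expansion being invoked. The details you supply (localization to bounded coefficients, layer-by-layer $L^p$ and $\mathbb{D}^{\infty}$ bounds uniform in $\ep$) are correct and consistent with the paper's conventions.
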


Let us write $\Psi^{A,\ep}_t=(\Psi^{A,\ep,1}_t,\ldots,\Psi^{A,\ep,2n+1}_t)$ with
\[
\Psi^{A,\ep,i}_t=
\begin{cases}
\Phi^{A,\ep,i}_t, & i=1,\ldots,2n+1,\\
\Phi^{A+1,\ep,2n+1}_t, & i=2n+1. 
\end{cases}
\]
Using Theorem \ref{thm:F-S}, we have
\[
\sum_{\lVert J\rVert=1}(\XH_Ju^i)(0)B^J_t
=\sum_{j=1}^{2n}(\XH_ju^i)(0)B^j_t
=\begin{cases}
B^i_t, & i=1,\ldots,2n, \\
0, & i=2n+1
\end{cases}
\]
and 
\[
\sum_{\lVert J\rVert=2}(\XH_Ju^{2n+1})(0)B^J_t
=\sum_{j,k=1}^{2n}(\XH_j\XH_ku^{2n+1})(0)B^{(j,k)}_t
+(\XH_0u^{2n+1})(0)t
=\sum_{\alpha=1}^nS^{\alpha}_t, 
\]
where
\[
S^{\alpha}_t=B^{(\alpha,n+\alpha)}_t-B^{(n+\alpha,\alpha)}_t
=\int_0^t(B^{\alpha}_s\circ dB^{n+\alpha}_s-B^{n+\alpha}_s\circ dB^{\alpha}_s)
\]
is Levi's stochastic area. 

From this we can write 
\begin{align*}
U^{\ep,i}_t 
&=\ep B^i_t+\sum_{a=2}^A\ep^a\varphi^{a,i}_t+\ep^{A+1}\Psi^{A,\ep,i}_t,\quad i=1,\ldots,2n,\\
U^{\ep,2n+1}_t 
&=\ep^2 \sum_{\alpha=1}^n S^{\alpha}_t
+\sum_{a=2}^A\ep^{a+1}\varphi^{a,2n+1}_t+\ep^{A+2}\Psi^{A,\ep,i}_t,
\end{align*}
where
\begin{equation}
\varphi^{a,i}_t
=\begin{cases}
\sum_{\lVert J\rVert=a}(\XH_Ju^i)(0)B^J_t, & i=1,\ldots,2n,\\
\sum_{\lVert J\rVert=a}(\XH_Ju^{2n+1})(0)B^J_t, & i=2n+1.
\end{cases}\label{eq:varphi}
\end{equation}
Let us write $\mathbb{X}_t=(B^1_t,\ldots,B^{2n}_t,\sum_{\alpha=1}^n S^{\alpha}_t)$. 
By the change of variables
\[
(\ep u^2,\ldots,\ep u^{2n},\ep^2u^{2n+1})\mapsto(v^1,\ldots,v^{2n},v^{2n+1}), 
\]
we have
\[
\mathbb{E}[\delta_0(U^{\ep}_1)]
=\ep^{-2n-2}\mathbb{E}[\delta_0(\mathbb{X}_1+\ep\Psi^{1,\ep}_1)]. 
\]

Let $F^{\ep}_t=\mathbb{X}_t+\ep\Psi^{1,\ep}_t$ and 
let $\sigma^{\ep}$ be the Malliavin covariance of $F^{\ep}_1$. 

\begin{lemma}\label{lem:nondegenerate}
For any $p\in[1,\infty)$,
we have $\sup_{\ep\in(0,1]}\mathbb{E}[(\det\sigma^{\ep})^{-p}]<\infty$, 
i.e. $\{F^{\ep}_1\}_{\ep\in(0,1]}$ is uniformly nondegenerate. 
\end{lemma}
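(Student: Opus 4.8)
The plan is to identify $F^\ep_1$ with the time‑$1$ value of the rescaled diffusion and then to run a Norris‑type argument whose constants do not depend on $\ep$. First I would set $V^{\ep,i}_t=\ep^{-1}U^{\ep,i}_t$ for $i=1,\ldots,2n$ and $V^{\ep,2n+1}_t=\ep^{-2}U^{\ep,2n+1}_t$; the change of variables $\lambda_\ep$ then turns the SDE defining $U^\ep_t$ into
\[
dV^\ep_t=\sum_{j=1}^{2n}\XH^\ep_j(V^\ep_t)\circ dB^j_t+\XH^\ep_0(V^\ep_t)\,dt,\qquad V^\ep_0=0,
\]
with $\XH^\ep_j$, $\XH^\ep_0$ as in \eqref{eq:XHepj}--\eqref{eq:XHep0}, and comparing with the expansions of $U^\ep_t$ recorded above (with $A=1$) one gets $F^\ep_1=V^\ep_1$, so that $\sigma^\ep$ is the Malliavin covariance of $V^\ep_1$. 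The one point that must be checked at the outset is that the coefficients $a^{i,\ep}_j$, $b^{i,\ep}$ of $\XH^\ep_j$, $\XH^\ep_0$ and all of their derivatives are bounded uniformly in $\ep\in(0,1]$ on compact sets: the only seemingly singular factor is the $\ep^{-1}$ in the definition of $a^{2n+1,\ep}_j$, and it is absorbed because $a^{2n+1}_j=O^2$ by Theorem \ref{thm:F-S} (cf.~\eqref{eq:aij'}). Granting this, the moments of $V^\ep_1$, of the Jacobian $Y_t$ of $u\mapsto V^\ep_t(u)$, of $Y_t^{-1}$, and of all their Malliavin--Sobolev norms are bounded uniformly in $\ep$ by the usual estimates for SDEs with $C^\infty_b$ coefficients.

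Next I would factor the covariance. Since $D^k_sV^\ep_1=Y_1Y_s^{-1}\XH^\ep_k(V^\ep_s)$ for $s\le1$ and vanishes for $s>1$, one has $\sigma^\ep=Y_1C^\ep Y_1^{*}$ with
\[
C^\ep=\sum_{k=1}^{2n}\int_0^1\bigl(Y_s^{-1}\XH^\ep_k(V^\ep_s)\bigr)\bigl(Y_s^{-1}\XH^\ep_k(V^\ep_s)\bigr)^{*}\,ds,
\]
whence $(\det\sigma^\ep)^{-p}=(\det Y_1)^{-2p}(\det C^\ep)^{-p}$. Because $\mathbb E[(\det Y_1)^{-q}]$ is bounded uniformly in $\ep$, H\"older's inequality reduces the statement to $\sup_{\ep\in(0,1]}\mathbb E[(\det C^\ep)^{-p}]<\infty$ for every $p$. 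Writing $\lambda^\ep=\inf_{|\xi|=1}\langle C^\ep\xi,\xi\rangle$ and using $\det C^\ep\ge(\lambda^\ep)^{2n+1}$, it suffices to produce, for each $p$, a constant $C_p$ independent of $\ep$ with $\mathbb P(\lambda^\ep<\delta)\le C_p\delta^{p}$ for all $\delta\in(0,1]$. As the entries of $C^\ep$ are dominated by $\mathrm{tr}\,C^\ep$, which has uniformly bounded moments, the quadratic form $\xi\mapsto\langle C^\ep\xi,\xi\rangle$ is Lipschitz on $S^{2n}$ with a uniformly integrable constant, so a standard covering of $S^{2n}$ reduces the matter to an estimate, uniform in $\ep$ and in a fixed unit vector $\xi$, on $\mathbb P\bigl(\sum_{k=1}^{2n}\int_0^1\langle Y_s^{-1}\XH^\ep_k(V^\ep_s),\xi\rangle^2\,ds<\delta\bigr)$.

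For this last estimate I would apply Norris's lemma to the continuous semimartingales $Z^j_t=\langle Y_t^{-1}\XH^\ep_j(V^\ep_t),\xi\rangle$, $j=1,\ldots,2n$, whose Stratonovich dynamics have martingale integrands $\langle Y_t^{-1}[\XH^\ep_k,\XH^\ep_j](V^\ep_t),\xi\rangle$ ($k=1,\ldots,2n$) and a bounded‑variation integrand built from $\langle Y_t^{-1}[\XH^\ep_0,\XH^\ep_j](V^\ep_t),\xi\rangle$ and iterated brackets; since all of these coefficients and their derivatives are uniformly bounded in $\ep$, Norris's lemma applies with $\ep$‑independent constants. Fixing some $\alpha\in\{1,\ldots,n\}$, off an event of probability $O(\delta^p)$ the smallness of $\sum_j\int_0^1(Z^j_t)^2\,dt$ propagates first to smallness of $\sup_t|Z^j_t|$ and of $\int_0^1\langle Y_t^{-1}[\XH^\ep_\alpha,\XH^\ep_{n+\alpha}](V^\ep_t),\xi\rangle^2\,dt$, and, after a second application to $t\mapsto\langle Y_t^{-1}[\XH^\ep_\alpha,\XH^\ep_{n+\alpha}](V^\ep_t),\xi\rangle$, to smallness of the supremum of that last process; concretely each of these quantities is at most $\delta^\kappa$ for some $\kappa>0$ not depending on $\ep$ or $\xi$. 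Evaluating at $t=0$, where $Y_0=I$ and $V^\ep_0=0$, this would force the deterministic numbers $|\langle\XH^\ep_j(0),\xi\rangle|$ ($j=1,\ldots,2n$) and $|\langle[\XH^\ep_\alpha,\XH^\ep_{n+\alpha}](0),\xi\rangle|$ all to be $\le\delta^\kappa$, which for $\delta$ below an $\ep$‑independent threshold $\delta_0$ contradicts Lemma \ref{lem:Hormander}. Hence for $\delta<\delta_0$ the event $\{\sum_j\int_0^1(Z^j_t)^2\,dt<\delta\}$ must be contained in the Norris exceptional sets, of total probability $O(\delta^p)$, which is the bound sought.

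I expect the real difficulty to be bookkeeping rather than a new idea: one must check that every constant entering the Jacobian and Sobolev estimates, and above all Norris's lemma, is genuinely independent of $\ep$, and this hinges entirely on the uniform $C^\infty_b$ control of $\XH^\ep_j$ and $\XH^\ep_0$, hence ultimately on the $O^2$‑vanishing supplied by Theorem \ref{thm:F-S}.
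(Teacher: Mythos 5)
Your proposal is correct in substance, but it replaces the paper's key citation by a self-contained argument. The paper proceeds exactly as you do up to the crucial reduction: it observes that $F^{\ep,i}_t=\ep^{-1}U^{\ep,i}_t$, $F^{\ep,2n+1}_t=\ep^{-2}U^{\ep,2n+1}_t$ solves the SDE driven by $\XH^{\ep}_0,\ldots,\XH^{\ep}_{2n}$, and that these vector fields and all their iterated brackets have coefficients bounded uniformly in $\ep$ near $0$, the only delicate point being the factor $\ep^{-1}$ in $a^{2n+1,\ep}_j$, absorbed precisely by the $O^2$-vanishing \eqref{eq:aij}--\eqref{eq:aij'} coming from Theorem \ref{thm:F-S} --- the same observation you make. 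At that point, however, the paper simply invokes Kusuoka--Stroock \cite[Theorem 2.17]{kusuoka-stroock} together with the uniform spanning condition of Lemma \ref{lem:Hormander} to get the tail bound $\mathbb{P}(\inf_{\xi\in S^{2n}}\langle\xi,\sigma^{\ep}\xi\rangle_{\mathbb{R}^{2n+1}}\le K^{-1/3})\le\mu_1\exp(-\mu_2K^{\mu_3})$ with $\mu_i$ independent of $\ep$, which immediately yields the uniform negative moments. You instead re-derive such a bound from scratch: factoring $\sigma^{\ep}=Y_1C^{\ep}Y_1^{*}$, covering $S^{2n}$, and running a Norris-lemma iteration whose constants are uniform in $\ep$ because the coefficients are uniformly $C^\infty_b$ and the spanning constant at $0$ is uniform by Lemma \ref{lem:Hormander}. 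This buys self-containedness (you only use the classical Norris machinery) at the cost of length and of the bookkeeping you yourself flag; the paper's route buys brevity, since the Kusuoka--Stroock theorem is exactly the packaged, parameter-uniform version of the estimate you reconstruct. One small imprecision in your sketch: smallness of $\sum_j\int_0^1(Z^j_t)^2\,dt$ does not give smallness of $\sup_{t\in[0,1]}\lvert Z^j_t\rvert$; what you need, and what the standard argument provides via moment bounds on H\"older norms of $Z^j$ and of the bracket processes, is smallness on a short initial time interval, which suffices to evaluate at $t=0$ and contradict Lemma \ref{lem:Hormander}. With that adjustment your argument goes through and is a legitimate alternative to the paper's proof.
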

\begin{proof}
We write $F^{\ep}_t=(F^{\ep,i}_t)_{i=1,\ldots,2n+1}$. Since
\[
F^{\ep,i}_t=\ep^{-1}U^{\ep,i}_t, \qquad F^{\ep,2n+1}_t=\ep^{-2}U^{\ep,2n+1}_t,
\quad i=1,\ldots,2n,
\]
we can see that $\{F^{\ep}_t\}_{t\ge0}$ obeys the SDE
\[
dF^{\ep}_t=\sum_{j=1}^{2n}\XH^{\ep}_j(F^{\ep}_t)\circ dB^j_t
+\XH^{\ep}_0(F^{\ep}_t)dt, 
\]
where $\XH^{\ep}_j$ is given in \eqref{eq:XHepj}, \eqref{eq:XHep0}. 

Note that for any $a\ge1$, $j_1,\ldots,j_a\in\{0,\ldots,2n\}$, 
the vector field
\[
[\XH^{\ep}_{j_a},[\ldots,[\XH^{\ep}_{j_2},\XH^{\ep}_{j_1}]\ldots]]
\]
has bounded coefficients uniformly in $\ep\in(0,1]$ on a neighborhood of $0\in\mathbb{R}^{2n+1}$. 
To see this, it is enough to show that
for each $i\in\{1,\ldots,2n+1\}$, $j\in\{1,\ldots,2n\}$, 
$a\ge1$, $j_1,\ldots,j_a\in\{0,\ldots,2n\}$ and $\ep'>0$, 
\begin{align}
&\sup_{\ep\in(0,1]}\sup_{\lvert u\rvert\le\ep'}
\lvert a^{i,\ep}_j(u)\rvert<\infty, 
\qquad 
\sup_{\ep\in(0,1]}\sup_{\lvert u\rvert\le\ep'}
\Bigl\lvert\frac{\pa}{\pa u^J}a^{i,\ep}_j(u)\Bigr\rvert<\infty, \label{eq:aij bdd}\\
&\sup_{\ep\in(0,1]}\sup_{\lvert u\rvert\le\ep'}
\lvert b^{i,\ep}(u)\rvert<\infty, 
\qquad 
\sup_{\ep\in(0,1]}\sup_{\lvert u\rvert\le\ep'}
\Bigl\lvert\frac{\pa}{\pa u^J}b^{i,\ep}(u)\Bigr\rvert<\infty\label{eq:bi bdd}
\end{align}
hold, where $J=(j_1,\ldots,j_a)$ and $\frac{\pa}{\pa u^J}=\frac{\pa}{\pa u^{j_1}}\cdots\frac{\pa}{\pa u^{j_b}}$. 

We show \eqref{eq:aij bdd} for $i=2n+1$ and proofs of other cases are similar or more clear.
We have by definition of $a^{2n+1,\ep}$ and \eqref{eq:aij} that 
\[
a^{2n+1,\ep}_{j}(u)=\ep^{-1}a^{2n+1}_j(\lambda_{\ep}(u))
=\sum_{k=1}^{2n}u^k\frac{\pa a^{2n+1}_j}{\pa u^k}(\lambda_{c\ep}(u))
+2c\ep u^{2n+1}\frac{\pa a^{2n+1}_j}{\pa u^{2n+1}}(\lambda_{c\ep}(u))
\]
for some $c\in(0,1)$ depending on $\ep$ and $u$. 
This shows the first formula in \eqref{eq:aij bdd}. 
We also have
\[
\frac{a^{2n+1,\ep}_j}{\pa u^J}(u)
=\ep^{\lVert J\rVert-1}\frac{\pa a^{2n+1}_j}{\pa u^J}(\lambda_{\ep}(u)),
\]
which shows the second formula in \eqref{eq:aij bdd}.

Therefore we apply \cite[Theorem 2.17]{kusuoka-stroock} to
$\{\XH^{\ep}_j\}_{j=0,\ldots,2n}$, together with 
Lemma \ref{lem:Hormander}, we have
\begin{align*}
\mathbb{P}(\det\sigma^{\ep}\le K^{-\frac{1}{3(2n+1)}})
&\le
\mathbb{P}(\inf_{\xi\in S^{2n}}\langle\xi,\sigma^{\ep}\xi\rangle_{\mathbb{R}^{2n+1}}
\le K^{-\frac{1}{3}})\\
&\le \mu_1\exp(-\mu_2K^{\mu_3}),\quad K\in[1,\infty)
\end{align*}
with positive constants $\mu_1,\mu_2,\mu_3$ which are independent of $\ep$.
This implies the desired result.
\end{proof}

By Lemma \ref{lem:nondegenerate} and the Taylor expansion
\[
\delta_0(x+y)
=\delta_0(x)+
\sum_{a=1}^{\infty}\frac{\ep^a}{a!}
\sum_{\lvert J\rvert=a}\mathbb{E}
[\pa^J\delta_0(x)y^J],
\]
where $y^J=y^{j_1}\cdots y^{j_a}$ and 
$\pa^J=\frac{\pa^a}{\pa u^{j_1}\cdots\pa u^{j_a}}$ for $J=(j_1,\ldots,j_a)$, 
we apply \cite[Theorem V.9.4]{i-w} to have 
\begin{equation}\label{eq:delta_expansion}
\ep^{2n+2}\mathbb{E}[\delta_0(U^{\ep}_1)]
=\mathbb{E}[\delta_0(\mathbb{X}_1)]
+\sum_{a=1}^{\infty}\frac{\ep^a}{a!}
\sum_{\lvert J\rvert=a}\mathbb{E}
[\pa^J\delta_0(\mathbb{X}_1)\Psi^{1,\ep,J}_1].
\end{equation}

Since we have by definition
\begin{equation}
\Psi^{1,\ep}_1=\sum_{a=2}^{A+1}\ep^{a-2}\varphi^a_1
+\ep^A\Psi^{A+1,\ep}_1, \label{eq:Psi}
\end{equation}
where $\varphi^A_t=(\varphi^{A,1}_t,\ldots,\varphi^{A,2n+1}_t)$, 
we arrive at the expression as follows: 

\begin{proposition}\label{prop:d_A}
We have $\ep^{2n+1}\mathbb{E}[\delta_0(U^{\ep}_1)]=d_0+d_1\ep+d_2\ep^2+\cdots$ with
\begin{align}
d_0&=\mathbb{E}[\delta_0(\mathbb{X}_1)], \nonumber\\
d_A&=\sum_{a=1}^A\frac{1}{a!}
\sum_{(*)}
(\XH_{J_1}u^{i_1})(0)\cdots(\XH_{J_a}u^{i_a})(0)
\mathbb{E}[\pa^I\delta_0(\mathbb{X}_1)B^{J_1}\cdots B^{J_a}], \quad A\ge1, \label{eq:d_A}
\end{align}
where the summation $(*)$ is taken over the set
\begin{align*}
&\{(I,J_1,\ldots,J_a);I=(i_b)_{b=1,\ldots,a},1\le i_b\le 2n+1,
\lVert J_b\rVert\ge 2+\delta_{i_b,2n+1},
\\
&\qquad\qquad\qquad\qquad\lVert J_1\rVert+\cdots+\lVert J_a\rVert=A+\lVert I\rVert
\}.
\end{align*}
\end{proposition}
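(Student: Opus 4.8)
The plan is to obtain the expansion by purely formal substitution, all of the analytic content having already been isolated: Lemma~\ref{lem:Hormander} supplies the H\"{o}rmander condition, Lemma~\ref{lem:nondegenerate} supplies uniform nondegeneracy of the family $\{\mathbb{X}_1+\ep\Psi^{1,\ep}_1\}_{\ep\in(0,1]}$, the stochastic Taylor expansion established above supplies the $L^p$-remainder bounds uniform in $\ep$, and together with Watanabe's theorem \cite[Theorem~V.9.4]{i-w} these already yield \eqref{eq:delta_expansion} as a genuine asymptotic expansion in $\ep$. What remains is to insert \eqref{eq:Psi} and then \eqref{eq:varphi} into \eqref{eq:delta_expansion} and regroup by powers of $\ep$. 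The term of \eqref{eq:delta_expansion} carrying no $\Psi$-factor is $\mathbb{E}[\delta_0(\mathbb{X}_1)]$, which is exactly $d_0$; every other term will feed into the coefficients $d_A$ with $A\ge1$.

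Concretely, I would fix a multi-index $I=(i_1,\ldots,i_a)$ occurring in \eqref{eq:delta_expansion} and expand the product $\Psi^{1,\ep,I}_1=\prod_{b=1}^{a}\Psi^{1,\ep,i_b}_1$. By \eqref{eq:Psi} each factor is, up to a remainder of order $\ep^{A}$, a polynomial in $\ep$ with coefficients given by \eqref{eq:varphi}; substituting \eqref{eq:varphi} then turns each resulting monomial into a product $(\XH_{J_1}u^{i_1})(0)\cdots(\XH_{J_a}u^{i_a})(0)\,B^{J_1}_1\cdots B^{J_a}_1$ times a power of $\ep$ whose exponent, read off from \eqref{eq:Psi}--\eqref{eq:varphi}, equals $\lVert J_b\rVert-2-\delta_{i_b,2n+1}$ for the $b$-th factor (the $\delta$ recording that the anisotropic rescaling $\lambda_\ep$ extracts one extra power of $\ep$ from the last coordinate). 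Adding the prefactor exponent $a=\lvert I\rvert$ coming from \eqref{eq:delta_expansion} and summing over $b$, the total $\ep$-exponent of the monomial is $\lVert J_1\rVert+\cdots+\lVert J_a\rVert-\lVert I\rVert$, where $\lVert I\rVert:=a+\#\{b:i_b=2n+1\}$ is the weight of $I$, in analogy with $\lVert J\rVert$. Hence the contributions to $\ep^{A}$ are precisely those with $\lVert J_1\rVert+\cdots+\lVert J_a\rVert=A+\lVert I\rVert$, and the bounds $\lVert J_b\rVert\ge 2+\delta_{i_b,2n+1}$ merely record where the expansion of each slot begins. Since $\lVert J_b\rVert\ge2$ forces $a\le A$ and each $\lVert J_b\rVert\le A+\lVert I\rVert$, only finitely many $(I,J_1,\ldots,J_a)$ contribute for a given $A$, so the regrouping is legitimate and produces $d_A$ in the form \eqref{eq:d_A}.

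The point that genuinely requires care — and which I expect to be the main obstacle — is that this rearrangement must be shown to respect the error terms: replacing each $\Psi^{1,\ep,i_b}_1$ by its degree-$A$ truncation from \eqref{eq:Psi} and inserting it into \eqref{eq:delta_expansion} must still leave an error of order $\ep^{A}$ in the left-hand side of \eqref{eq:delta_expansion}. This is handled exactly as in the derivation of \eqref{eq:delta_expansion} itself: the remainder $\Psi^{A+1,\ep}_1$ lies in $\mathbb{D}^{\infty}(\mathbb{R}^{2n+1})$ with $L^p$-norms bounded uniformly in $\ep\in(0,1]$, the family $\{\mathbb{X}_1+\ep\Psi^{1,\ep}_1\}_{\ep}$ is uniformly nondegenerate by Lemma~\ref{lem:nondegenerate}, and the composition of $\pa^J\delta_0$ with such Wiener functionals followed by taking $\mathbb{E}$ is continuous in the relevant Sobolev norms by \cite[Theorem~V.9.4]{i-w}; hence each truncation error, after composition with $\pa^J\delta_0(\mathbb{X}_1)$ and expectation, is $O(\ep^{A})$ uniformly in $\ep$. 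Letting $A\to\infty$ then delivers the full expansion $d_0+d_1\ep+d_2\ep^2+\cdots$ with coefficients \eqref{eq:d_A}.
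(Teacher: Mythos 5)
Your proposal follows essentially the same route as the paper: substitute \eqref{eq:Psi} and \eqref{eq:varphi} into \eqref{eq:delta_expansion}, track the $\ep$-exponent of each factor via the weights $\lVert J_b\rVert$ and $\lVert I\rVert=a+\#\{b:i_b=2n+1\}$, and collect the terms with total exponent $A$, which is exactly the paper's reindexing $\lVert J_b\rVert=2+k_b+\delta_{i_b,2n+1}$, $k_1+\cdots+k_a=A-a$. Your explicit justification of the truncation errors (uniform nondegeneracy from Lemma \ref{lem:nondegenerate}, uniform $L^p$ bounds on $\Psi^{A+1,\ep}_1$, and Watanabe's theorem) is correct and simply spells out what the paper leaves implicit.
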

\begin{proof}
From \eqref{eq:delta_expansion} and \eqref{eq:Psi} we have the expansion
\[
\ep^{2n+1}\mathbb{E}[\delta_0(U^{\ep}_1)]=d_0+d_1\ep+d_2\ep^2+\cdots
\] 
with $d_0=\mathbb{E}[\delta_0(\mathbb{X}_1)]$ and 
\begin{align*}
d_A=\sum_{a=1}^A
\frac{1}{a!}\sum_{i_1,\ldots,i_a=1}^{2n+1}
\sum_{\substack{k_1,\ldots,k_a\ge0\\k_1+\cdots+k_a=A-a}}
\mathbb{E}[\pa^{(i_1,\ldots,i_a)}\delta_0(\mathbb{X}_1)
\varphi^{2+k_1,i_1}_1\cdots\varphi^{2+k_a,i_a}_1]
\end{align*}
for $A\ge1$. \eqref{eq:varphi} yields
\begin{align*}
&\mathbb{E}[\pa^{(i_1,\ldots,i_a)}\delta_0(\mathbb{X}_1)
\varphi^{2+k_1,i_1}_1\cdots\varphi^{2+k_a,i_a}_1]\\
&\qquad=\sum_{\lVert J_1\rVert=l_1}\cdots\sum_{\lVert J_a\rVert=l_a}
(\XH_{J_1}u^{i_1})(0)\cdots(\XH_{J_a}u^{i_a})(0)
\mathbb{E}[\pa^{(i_1,\ldots,i_a)}(\mathbb{X}_1)B^{J_1}_1\cdots B^{J_a}_1]
\end{align*}
with $l_b=2+k_b+\delta_{i_b,2n+1}$. For fixed $I=(i_1,\ldots,i_a)$ we have 
\begin{align*}
&k_b\ge0,k_1+\cdots+k_a=A-a,\lVert J_b\rVert=2+k_b+\delta_{i_b,2n+1}\\
&\qquad\iff \lVert J_b\rVert\ge2+\delta_{i_b,2n+1},
\lVert J_1\rVert+\cdots+\lVert J_a\rVert=A+\lVert I\rVert,
\lVert J_b\rVert=2+k_b+\delta_{i_b,2n+1}, 
\end{align*}
the proposition follows. 
\end{proof}

If we set $\mathbb{X}'_t=(-B^1_t,\ldots,-B^{2n}_t,\sum_{\alpha=1}^nS^{\alpha}_t)$,
It is easy to observe that, for each term appealing in \eqref{eq:d_A}, 
\begin{align*}
&\mathbb{E}[\pa^{i_1\cdots i_a}(\mathbb{X}'_1)(-B^{J_1}_1)\cdots(-B^{J_a}_1)]\\
&\qquad
=(-1)^{\lVert I\rVert}(-1)^{\lVert J_1\rVert+\cdots\lVert J_a\rVert}
\mathbb{E}[\pa^{i_1\cdots i_a}(\mathbb{X}_1)B^{J_1}_1\cdots B^{J_a}_1]\\
&\qquad
=(-1)^A\mathbb{E}[\pa^{i_1\cdots i_a}(\mathbb{X}_1)B^{J_1}_1\cdots B^{J_a}_1]. 
\end{align*}
By considering the Brownian motion $(-B^1_t,\ldots,-B^{2n}_t)$ instead of 
$(B^1_t,\ldots,B^{2n}_t)$, it follows that 
\[
d_A=0\quad\text{if $A$ is odd. }
\]
Using this, if we set $\ep^2=t$ and $c_A=d_{2A}$, 
we have the desired expansion in Theorem \ref{thm:main}. 
In \eqref{eq:d_A}, we observe that 
$(\XH_{J_b}u^{i_b})(0)$ is not affected by $O^{\lVert J_b\rVert+1}$ terms, 
and if we note that 
\[
\lVert J_b\rVert \le A+\lVert I\rVert -2(a-1)\le A+2, 
\]
Theorem \ref{thm:F-S} shows that 
$d_A$ can be written as an at most $A$-th degree polynomial
of at most $A+2$-th derivatives of Christoffel symbols. 

It remains to calculate the leading term $c_0=d_0=\mathbb{E}[\delta_0(\mathbb{X}_1)]$. 
$\mathbb{E}[\delta_0(\mathbb{X}_1)]$ is equal to the value at 0 
of the density function of
$\mathbb{X}_1=(B^1_1,\ldots,B^{2n}_1,\sum_{\alpha=1}^nS^{\alpha}_1)$. 
By \cite[Th\'{e}or\`{e}m 1]{gaveau}, we see that 
\[
c_0=\frac{1}{(2\pi)^{n+1}}\int_{\mathbb{R}}\biggl(\frac{2\tau}{\sinh 2\tau}\biggr)^n d\tau
\]
and the proof of Theorem \ref{thm:main} is complete. 

\begin{remark}
By the integration-by-parts formula for generalized Wiener functionals
(cf, for example \cite{i-w}), we can write
\[
\mathbb{E}[\pa^I\delta_0(\mathbb{X}_1)B^{J_1}\cdots B^{J_a}]
=\mathbb{E}[\delta_0(\mathbb{X}_1)\Psi]
=\mathbb{E}[\Psi|\mathbb{X}_1=0]p_{\mathbb{X}_1}(0)
\]
for a function $\Psi$ of $\{B_t\}_{t\in[0,1]}$, where
$p_{\mathbb{X}_1}$ is the density function of $\mathbb{X}_1$. 
$\Psi$ is generally so complicated to write down but some examples of calculation
will be shown in Section \ref{sec:ex}.
\end{remark}

\section{Examples}\label{sec:ex}
We show two examples of calculating the coefficients in the asymptotic expansion 
in Theorem \ref{thm:main}. 
One is the Heisenberg group, which is a trivial example and all higher 
terms of the expansion vanish. 
The other is the CR sphere, in which some non-trivial higher terms appear. 

\subsection{The Heisenberg group}
Let $\mathbb{H}_n=\mathbb{C}^n\times\mathbb{R}$ be the $(2n+1)$-dimensional
Heisenberg group with a coordinate system
$(z,t)$, $z=(z^1,\ldots,z^n)\in\mathbb{C}^n$, $t\in\mathbb{R}$. 
$\mathbb{H}^n$ is a strictly pseudoconvex CR manifold whose CR structure is 
\[
T_{1,0}=\bigoplus_{\alpha=1}^n\mathbb{C}Z_{\alpha}, \quad
Z_{\alpha}=\frac{\pa}{\pa z^{\alpha}}+\kyosu\overline{z^{\alpha}}\frac{\pa}{\pa t}. 
\]
Note that $\{Z_{\alpha}\}_{\alpha=1,\ldots,n}$ is a global orthonormal frame 
for $T_{1,0}$. 
It is easy to see that the associated covariant derivation is a null mapping, i.e.
\[
\nabla_{AB}^C=0 \quad\text{for any $A,B,C\in\{0,1,\ldots,n,\overline{1},\ldots,\overline{n}\}$}. 
\]
The structure functions introduced in Section \ref{sec.Folland-Stein} are 
\[
C^{2n+1}_{\alpha,n+\alpha}=2,\qquad C^{2n+1}_{n+\alpha,\alpha}=-2, \quad
\alpha=1,\ldots,n
\]
and other $C^i_{jk}$'s are zero. Then we have
\[
\XH_{\alpha}=\frac{\pa}{\pa u^{\alpha}}-u^{n+\alpha}\frac{\pa}{\pa u^{2n+1}}, \qquad
\XH_{n+\alpha}=\frac{\pa}{\pa u^{n+\alpha}}+u^{\alpha}\frac{\pa}{\pa u^{2n+1}}, \quad
\alpha=1,\ldots,n,
\]
therefore we see that $\varphi^a_t=0$ for any $a\ge2$. 
It follows that for $C^{\mathbb{H}_n}_a=0$ for $a\ge1$. 

\begin{remark}
This result also follows immediately from \cite[Th\'{e}or\`{e}m 1]{gaveau}, which says
$p(t,x,x)=t^{-n-1}\frac{1}{(2\pi)^{n+1}}\int_{\mathbb{R}}(\frac{2\tau}{\sinh 2\tau})^n d\tau$. 
\end{remark}

\subsection{The CR sphere}
Let $S^{2n+1}=\{(z^1,\ldots,z^{n+1})\in\mathbb{C}^{n+1};
\sum_{\alpha=1}^{n+1}\lvert z^{\alpha}\rvert^2=1\}\subset\mathbb{C}^{n+1}$ be 
the $(2n+1)$-dimensional unit sphere which we regard as a submanifold of $\mathbb{C}^{n+1}$. 
$S^{2n+1}$ has a CR structure 
\begin{align*}
(T_{1,0})_z
&=(T^{1,0}\mathbb{C}^{n+1})_z\cap\mathbb{C}T_zS^{2n+1}\\
&=\biggl\{\sum_{\alpha=1}^{n+1}v^{\alpha}\frac{\pa}{\pa z^{\alpha}};
\sum_{\alpha=1}^nv^{\alpha}\overline{z^{\alpha}}=0\biggr\}, \quad
z=(z^1,\ldots,z^{n+1})\in S^{2n+1}. 
\end{align*}

We can take 
\begin{align*}
\theta&=\kyosu\sum_{\alpha=1}^{n+1}
(-\overline{z^{\alpha}}dz^{\alpha}+z^{\alpha}\overline{z^{\alpha}}), \\
T&=\frac{\kyosu}{2}\sum_{\alpha=1}^{n+1}
\Bigl(z^{\alpha}\frac{\pa}{\pa z^{\alpha}}
-\overline{z^{\alpha}}\frac{\pa}{\pa \overline{z^{\alpha}}}\Bigr)
\end{align*}
and then the Levi form is
\[
L_{\theta}=2\sum_{\alpha=1}^{n+1}dz^{\alpha}\wedge d\overline{z^{\alpha}}, 
\]
which is positive definite, thus $S^{2n+1}$ is strictly pseudoconvex. 

Set
\[
T_{\alpha}=\frac{\pa}{\pa z^{\alpha}}
-\overline{z^{\alpha}}\sum_{\beta=1}^{n+1}z^{\beta}\frac{\pa}{\pa z^{\beta}}
\in T_{1,0}
\]
and $T_{\overline{\alpha}}=\overline{T_{\alpha}}$, 
then we have
\[
L_{\theta}(T_{\alpha},T_{\overline{\beta}})
=\delta_{\alpha\beta}-\overline{z^{\alpha}}z^{\beta}. 
\]

In the open subset $U=\{z;z^{n+1}\ne0\}$, if we set 
\[
Z_{\alpha}=T_{\al}-
\frac{\ov{z^{\al}}z^{n+1}}{\lvert z^{n+1}\rvert (1+\lvert z^{n+1}\rvert)}T_{n+1}, \quad
\al=1,\ldots,n, 
\]
then $\{Z_{\al}\}_{\al=1,\ldots,n}$ is a local orthonormal frame for $T_{1,0}$ on $U$. 
A direct calculation shows that for $\alpha,\beta=1,\ldots,n$, 
\begin{align}
[Z_{\al},Z_{\be}]
&=\frac{1}{1+\lvert z^{n+1}\rvert}(-\ov{z^{\be}}Z_{\al}+\ov{z^{\al}}Z_{\be}), \nonumber\\
[Z_{\al},Z_{\ov{\be}}]
&=\Bigl(\frac{\delta_{\al\be}}{1+\lvert z^{n+1}\rvert}
+\frac{\ov{z^{\al}}z^{\be}}{2\lvert z^{n+1}\rvert(1+\lvert z^{n+1}\rvert)^2}\Bigr)
\sum_{\gamma=1}^n(z^{\gamma}Z_{\gamma}+\overline{z^{\gamma}}Z_{\ov{\gamma}})
-2\kyosu\delta_{\alpha\beta}T, \label{eq:CR_Z}\\
[T,Z_{\alpha}]&=-\frac{\kyosu}{2}Z_{\al}. \nonumber
\end{align}
By rewriting these equalities in terms of $\XH_i$, 
we have the structure functions:
\begin{align*}
C_{\al\be}^{\gamma}
&=\frac{1}{\sqrt{2}}\Bigl(
\frac{-\delta_{\alpha\gamma}x^{\beta}+\delta_{\be\gamma}x^{\al}}{1+\lvert z^{n+1}\rvert}
+\frac{(x^{\al}y^{\be}-x^{\be}y^{\al})y^{\gamma}}{\lvert z^{n+1}\rvert(1+\lvert z^{n+1}\rvert)^2}
\Bigr),\\
C_{\al\be}^{n+\gamma}
&=\frac{1}{\sqrt{2}}\Bigl(
\frac{-\delta_{\alpha\gamma}y^{\beta}+\delta_{\be\gamma}y^{\al}}{1+\lvert z^{n+1}\rvert}
+\frac{(x^{\al}y^{\be}-x^{\be}y^{\al})x^{\gamma}}{\lvert z^{n+1}\rvert(1+\lvert z^{n+1}\rvert)^2}
\Bigr),\\
C_{n+\al,n+\be}^{\gamma}
&=\frac{1}{\sqrt{2}}\Bigl(
\frac{\delta_{\alpha\gamma}x^{\beta}-\delta_{\be\gamma}x^{\al}}{1+\lvert z^{n+1}\rvert}
+\frac{(x^{\al}y^{\be}-x^{\be}y^{\al})y^{\gamma}}{\lvert z^{n+1}\rvert(1+\lvert z^{n+1}\rvert)^2}
\Bigr),\\
C_{n+\al,n+\be}^{n+\gamma}
&=\frac{1}{\sqrt{2}}\Bigl(
\frac{\delta_{\alpha\gamma}y^{\beta}-\delta_{\be\gamma}y^{\al}}{1+\lvert z^{n+1}\rvert}
+\frac{(x^{\al}y^{\be}-x^{\be}y^{\al})x^{\gamma}}{\lvert z^{n+1}\rvert(1+\lvert z^{n+1}\rvert)^2}
\Bigr),\\
C_{\al,n+\be}^{\gamma}
&=\frac{1}{\sqrt{2}}\Bigl(
\frac{\delta_{\alpha\gamma}y^{\beta}-\delta_{\be\gamma}y^{\al}+2\delta_{\al\be}y^{\gamma}}{1+\lvert z^{n+1}\rvert}
+\frac{(x^{\al}x^{\be}+y^{\al}y^{\be})y^{\gamma}}{\lvert z^{n+1}\rvert(1+\lvert z^{n+1}\rvert)^2}
\Bigr),\\
C_{\al,n+\be}^{n+\gamma}
&=\frac{1}{\sqrt{2}}\Bigl(
\frac{-\delta_{\alpha\gamma}x^{\beta}+\delta_{\be\gamma}x^{\al}+2\delta_{\al\be}x^{\gamma}}{1+\lvert z^{n+1}\rvert}
+\frac{(x^{\al}x^{\be}+y^{\al}y^{\be})x^{\gamma}}{\lvert z^{n+1}\rvert(1+\lvert z^{n+1}\rvert)^2}
\Bigr),
\end{align*}
\[
C_{\al\be}^{2n+1}=0,\qquad
C_{n+\al,n+\be}^{2n+1}=0,\qquad
C_{\al,n+\be}^{2n+1}=2\delta_{\al\be},
\]
\[
C_{2n+1,\al}^{\be}=0,\qquad
C_{2n+1,\al}^{n+\be}=\frac{\delta_{\al\be}}{2},\qquad
C_{2n+1,n+\al}^{\be}=-\frac{\delta_{\al\be}}{2},\qquad
C_{2n+1,n+\al}^{n+\be}=0,
\]
where $\alpha,\beta,\gamma\in\{1,\ldots,n\}$ and
$x^{\alpha}=\Re z^{\alpha}$, $y^{\alpha}=\Im z^{\alpha}$. 

We can also calculate Christoffel symbols, that is, 
by comparing \eqref{eq:CR_Z} and \eqref{eq:Z_bracket}, we have
\begin{align*}
\Gamma_{\ov{\be}\al}^{\gamma}
&=\Bigl(
\frac{\delta_{\al\be}}{1+\lvert z^{n+1}\rvert}
+\frac{\ov{z^{\al}}z^{\be}}{2\lvert z^{n+1}\rvert(1+\lvert z^{n+1}\rvert)^2}
\Bigr)z^{\gamma}, \\
\Gamma_{\al\ov{\be}}^{\ov{\gamma}}
&=\Bigl(
\frac{\delta_{\al\be}}{1+\lvert z^{n+1}\rvert}
+\frac{\ov{z^{\al}}z^{\be}}{2\lvert z^{n+1}\rvert(1+\lvert z^{n+1}\rvert)^2}
\Bigr)\ov{z^{\gamma}} 
\end{align*}
for $\alpha,\beta,\gamma\in\{1,\ldots,n\}$. In particular, we have
\[
\Gamma_{\ov{\be}\be}^{\al}=
\Bigl(
\frac{1}{1+\lvert z^{n+1}\rvert}
+\frac{\lvert z^{\beta}\rvert^2}{2\lvert z^{n+1}\rvert(1+\lvert z^{n+1}\rvert)^2}
\Bigr)z^{\alpha}, \quad
\alpha,\beta=1,\ldots,n.
\]

Now let $x=(0,\ldots,0,1)\in S^{2n+1}$ and 
we consider the Folland-Stein normal coordinate $\{u^j\}_{j=1,\ldots,2n+1}$ around $x$. 
Theorem \ref{thm:F-S} and the calculation of $C^i_{jk}$'s above show that 
\begin{align*}
\XH_{\alpha}
&=\frac{\pa}{\pa u^{\alpha}}-u^{n+\alpha}\frac{\pa}{\pa u^{2n+1}}
+\frac{1}{12}\sum_{j=1}^{2n}(u^j)^2\frac{\pa}{\pa u^{\al}}
+\frac{1}{4}u^{2n+1}\frac{\pa}{\pa u^{n+\al}}\\
&\qquad
-\sum_{j=1}^{2n}\Bigl(\frac{1}{12}u^{\al}u^j+O^3\Bigr)\frac{\pa}{\pa u^j}
+\Bigl(\frac{1}{12}u^{\al}u^{2n+1}-\frac{1}{8}u^{n+\al}\sum_{j=1}^{2n}(u^j)^2+O^4\Bigr)
\frac{\pa}{\pa u^{2n+1}}, \\
\XH_{n+\alpha}
&=\frac{\pa}{\pa u^{n+\alpha}}-u^{\alpha}\frac{\pa}{\pa u^{2n+1}}
-\frac{1}{4}u^{2n+1}\frac{\pa}{\pa u^{\al}}
+\frac{1}{12}\sum_{j=1}^{2n}(u^j)^2\frac{\pa}{\pa u^{n+\al}}\\
&\qquad
-\sum_{j=1}^{2n}\Bigl(\frac{1}{12}u^{n+\al}u^j+O^3\Bigr)\frac{\pa}{\pa u^j}
+\Bigl(\frac{1}{12}u^{n+\al}u^{2n+1}+\frac{1}{8}u^{\al}\sum_{j=1}^{2n}(u^j)^2+O^4\Bigr)
\frac{\pa}{\pa u^{2n+1}}. 
\end{align*}
We also note that $\Gamma_{\ov{\be}\be}^{\al}(0)=\Gamma_{\be\ov{\be}}^{\ov{\al}}(0)=0$. 
Then we have 
\[
\varphi^{2,i}_t=0,\qquad i=1,\ldots,2n+1. 
\]
From this, Theorem \ref{thm:main} and Proposition \ref{prop:d_A} show that 
\[
c^{S^{2n+1}}_1(x)
=\sum_{i=1}^{2n+1}\mathbb{E}[\pa^i\delta_0(\mathbb{X}_1)\varphi^{3,i}_1]. 
\]
We also have
\begin{align*}
\varphi^{3,\alpha}_t
&=\sum_{j=1}^{2n}\Bigl(\frac{1}{6}B^{(j,j,\alpha)}_t
-\frac{1}{12}B^{(j,\alpha,j)}_t-\frac{1}{12}B^{(\al,j,j)}_t\Bigr)\\
&\qquad\qquad\qquad+\sum_{\beta=1}^n\Bigl(
\frac{1}{4}B^{(n+\be,\be,n+\alpha)}_t-\frac{1}{4}B^{(\be,n+\be,n+\alpha)}_t
\Bigr)
-\frac{n}{2}B^{(\al,0)}_t, \\
\varphi^{3,n+\alpha}_t
&=\sum_{j=1}^{2n}\Bigl(\frac{1}{6}B^{(j,j,\alpha)}_t
-\frac{1}{12}B^{(j,\alpha,j)}_t-\frac{1}{12}B^{(\al,j,j)}_t\Bigr)\\
&\qquad\qquad\qquad+\sum_{\beta=1}^n\Bigl(
-\frac{1}{4}B^{(n+\be,\be,\alpha)}_t+\frac{1}{4}B^{(\be,n+\be,\alpha)}_t
\Bigr)
-\frac{n}{2}B^{(\al,0)}_t, \\
\varphi^{3,2n+1}_t
&=
\sum_{\alpha=1}^n\sum_{j=1}^{2n}\Bigl(
\frac{5}{12}B^{(j,j,\al,n+\al)}_t
-\frac{5}{12}B^{(j,j,n+\al,\al)}_t
+\frac{1}{6}B^{(j,\al,j,n+\al)}_t
-\frac{1}{6}B^{(j,n+\al,j,\al)}_t\\
&\qquad\qquad\qquad
-\frac{1}{12}B^{(n+\al,j,\al,j)}_t
+\frac{1}{12}B^{(\al,j,n+\al,j)}_t
-\frac{1}{12}B^{(j,n+\al,\al,j)}_t
+\frac{1}{12}B^{(j,\al,n+\al,j)}_t\Bigr)\\
&\qquad+
\sum_{\alpha=1}^n\Bigl(
-\frac{n}{2}B^{(\al,0,n+\al)}_t
+\frac{n}{2}B^{(n+\al,0,\al)}_t 
\Bigr). 
\end{align*}

Let us describe $c^{S^{2n+1}}_1(x)$ more specifically using 
modified derivative operator. 

Let $W^{2n}=\{w=(w^1,\ldots,w^{2n})\colon[0,1]\to\mathbb{R}^{2n};\text{continuous}, w(0)=0\}$ be
the $2n$-dimensional Wiener space and 
our Brownian motion is regarded as $B^i(w)=w^i$, $w\in W^{2n}$. 
Let $H$ be the Cameron-Martin subspace of $W^{2n}$ and 
$\nabla\colon\mathbb{D}^{-\infty}(\mathbb{R}^{2n})\to\mathbb{D}^{-\infty}(H\otimes\mathbb{R}^{2n})$, 
$\nabla^*\colon\mathbb{D}^{-\infty}(H\otimes\mathbb{R}^{2n})\to\mathbb{D}^{-\infty}(\mathbb{R}^{2n})$ be
the derivation of generalized Wiener functionals and its dual respectively. 
The inner product in $H$ is denoted by $\langle,\rangle$. 

Set $S_t=\sum_{\alpha=1}^n S^{\alpha}_t$, then
$\mathbb{X}_1=(B^1_1,\ldots,B^{2n}_1,S_1)$ yields 
$\nabla\mathbb{X}_1=(B^1_1,\ldots,B^{2n}_1,\nabla S_1)$, thus for $i=1,\ldots,2n$, 
\[
\langle \nabla\delta_0(\mathbb{X}_1),B^i_1\rangle
=\pa^i\delta_0(\mathbb{X}_1)+\pa^{2n+1}\delta_0(\mathbb{X}_1)\langle\nabla S_1,B^i_1\rangle. 
\]
Therefore we have
\[
\mathbb{E}[\pa^i\delta_0(\mathbb{X}_1)\Psi]
=\mathbb{E}[\delta_0(\mathbb{X}_1)\nabla^*(\Psi B^i_1)
-\pa^{2n+1}\delta_0(\mathbb{X}_1)\langle S_1,B^i_1\rangle\Psi]. 
\]
Define $\NH\colon\mathbb{D}^{-\infty}(\mathbb{R}^{2n})\to\mathbb{D}^{-\infty}(H\otimes\mathbb{R}^{2n})$ by
$\NH F=\nabla F-\sum_{i=1}^{2n}\langle \nabla F,B^i_1\rangle B^i_1$. 
Then we have $\NH(\delta_0(\mathbb{X}_1))=\pa^{2n+1}\delta_0(\mathbb{X}_1)\NH S_1$ and
\begin{align*}
\mathbb{E}[\pa^{2n+1}\delta_0(\mathbb{X}_1)\Psi]
&=\mathbb{E}[\langle\NH(\delta_0(\mathbb{X}_1)),\NH S_1\rangle\lVert\NH S_1\rVert^{-2}\Psi]\\
&=\mathbb{E}[\delta_0(\mathbb{X}_1)\nabla^*(\lVert\NH S_1\rVert^{-2}\NH S_1\Psi)]. 
\end{align*}
A direct calculation shows
\begin{align*}
\NH S^{\alpha}_1(t)&=-2\int_0^t B^{n+\alpha}_sds+2t\int_0^1B^{n+\alpha}_sds,\\
\NH S^{n+\alpha}_1(t)&=2\int_0^t B^{\alpha}_sds-2t\int_0^1B^{\alpha}_sds
\end{align*}
for $\alpha=1,\ldots,n$. 
Combining these and performing a direct but lengthy calculation, 
we can write the coefficient $c^{S^{2n+1}}_1(x)$ as follows:

\begin{proposition}
For $x=(0,\ldots,0,1)\in S^{2n+1}$, we have
$c^{S^{2n+1}}_1(x)=\mathbb{E}[\delta_0(\mathbb{X}_1)\Phi]$, where
\begin{align*}
\Phi&=
\frac{1}{6}\sum_{i,j=1}^{2n}\Phi_{jji}^i
-\frac{1}{12}\sum_{i,j=1}^{2n}\Phi_{jij}^i
-\frac{1}{12}\sum_{i,j=1}^{2n}\Phi_{ijj}^i\\
&\qquad
+\frac{1}{4}\sum_{\al,\be=1}^n\Phi_{n+\be,\be,n+\al}^{\al}
-\frac{1}{4}\sum_{\al,\be=1}^n\Phi_{\be,n+\be,n+\al}^{\al}
-\frac{1}{4}\sum_{\al,\be=1}^n\Phi_{n+\be,\be,\al}^{n+\al}
+\frac{1}{4}\sum_{\al,\be=1}^n\Phi_{\be,n+\be,\al}^{n+\al}\\
&\qquad
-\frac{n}{2}\sum_{i=1}^{2n}\Phi_{i,2n+1}^i
-\frac{n}{2}\sum_{\alpha=1}^{n}(\Phi_{\alpha,2n+1,n+\al}^{2n+1}-\Phi_{n+\al,2n+1,\al}^{2n+1})\\
&\qquad
+\frac{5}{12}\sum_{\al=1}^n\sum_{j=1}^{2n}(\Phi_{j,j,\al,n+\al}^{2n+1}-\Phi_{j,j,n+\al,\al}^{2n+1})
+\frac{1}{6}\sum_{\al=1}^n\sum_{j=1}^{2n}(\Phi_{j,\al,j,n+\al}^{2n+1}-\Phi_{j,n+\al,j,\al}^{2n+1})\\
&\qquad
+\frac{1}{6}\sum_{\al=1}^n\sum_{j=1}^{2n}(\Phi_{\al,j,j,n+\al}^{2n+1}-\Phi_{n+\al,j,j,\al}^{2n+1})
+\frac{1}{12}\sum_{\al=1}^n\sum_{j=1}^{2n}(\Phi_{\al,j,n+\al,j}^{2n+1}-\Phi_{n+\al,j,\al,j}^{2n+1})\\
&\qquad
+\frac{1}{12}\sum_{\al=1}^n\sum_{j=1}^{2n}(\Phi_{j,\al,n+\al,j}^{2n+1}-\Phi_{j,n+\al,\al,j}^{2n+1})
-\frac{1}{6}\sum_{\al=1}^n\sum_{j=1}^{2n}(\Phi_{\al,n+\al,j,j}^{2n+1}-\Phi_{n+\al,\al,j,j}^{2n+1}),
\end{align*}
\[
\mathbb{E}[\pa^i\delta_0(\mathbb{X}_1)B^J_1]=\mathbb{E}[\delta_0(\mathbb{X}_1)\Phi^i_J], \quad
i=1,\ldots,2n+1,J=(j_1,\ldots,j_a), j_b=0,\ldots,2n. 
\]
Moreover, for $i,j,k,l=1,\ldots,2n$ we have
\begin{align*}
\Phi^i_{jkl}
&=
-\delta_{ij}B^{(0,k,l)}_1-\delta_{ik}B^{(j,0,l)}_1-\delta_{il}B^{(j,k,0)}_1
+(\kappa^i_1\kappa_4+\kappa^i_2\kappa_3)B^{(j,k,l)}_1\\
&\qquad
-2\kappa^i_1\kappa_3\biggl(\sigma(j)B^{(\sigma(j)n+j,0,k,l)}_1
+\sigma(k)\int_0^1\int_0^s B^j_uB^{\sigma(k)n+k}_udu\circ dB^l_s\\
&\qquad\qquad\qquad
+\sigma(l)\int_0^1B^{(j,k)}_sB^{\sigma(l)n+l}_sds\biggr),\\
\Phi^i_{i,0}
&=
(\kappa^i_1\kappa_4+\kappa^i_2\kappa_3)B^{(i,0)}_1
-2\sigma(i)\kappa^i_1\kappa_3B^{(\sigma(i)n+i,0)}_1,\\
\Phi^{2n+1}_{ijkl}
&=-\kappa_4B^{(i,j,k,l)}_1\\
&\qquad
+2\kappa_3\biggl(
\sigma(i)B^{(\sigma(i)n+i,0,j,k,l)}_1
+\sigma(j)\int_0^1\int_0^s\int_0^u B^i_vB^{\sigma(j)n+j}_vdv\circ dB^k_v\circ dB^l_s\\
&\qquad\qquad\qquad
+\sigma(k)\int_0^1\int_0^sB^{(i,j)}_uB^{\sigma(k)n+k}_udu\circ dB^l_s
+\sigma(l)\int_0^1B^{(i,j,k)}_sB^{\sigma(l)n+l}_sds\biggr),\\
\Phi^{2n+1}_{i,0,j}
&=-\kappa_4B^{(i,0,j)}_1
+2\kappa_3\biggl(
\sigma(i)B^{(\sigma(i)n+i,0,0,j)}_1
+\sigma(j)\int_0^1B^{(i,0)}_sB^{\sigma(j)n+j}_sds\biggr), 
\end{align*}
where
\[
\sigma(i)=\begin{cases}1,&i=1,\ldots,n,\\-1,&i=n+1,\ldots,2n,\end{cases}
\]
\begin{align*}
\kappa^i_1&=-2\sigma(i)B^{(\sigma(i)n+i,0)}_1,\\
\kappa^i_2&=2B^{(i,0)}_1-4B^{(i,0,0)}_1,\\
\kappa_3&=\biggl(4\sum_{i=1}^{2n}\biggl(\int_0^1(B^i_t)^2dt-(B^{(i,0)}_1)^2\biggr)\biggr)^{-1},\\
\kappa_4&=-8(\kappa_3)^2
\sum_{\alpha=1}^n\biggl(
-2\int_0^1B^{(n+\al,0)}_tB^{\alpha}_tdt+2\int_0^1B^{(\al,0)}_tB^{n+\alpha}_tdt
\biggr)\\
&\qquad
+2\sum_{\alpha=1}^n(
B^{(\alpha,0)}_1B^{(n+\alpha,0,0)}_1-B^{(n+\alpha,0)}_1B^{(\alpha,0,0)}_1
). 
\end{align*}
\end{proposition}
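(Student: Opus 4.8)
The plan is to take as given everything established before the statement: the reduction $c^{S^{2n+1}}_1(x)=\sum_{i=1}^{2n+1}\mathbb{E}[\pa^i\delta_0(\mathbb{X}_1)\varphi^{3,i}_1]$; the explicit expressions for $\varphi^{3,\al}_1$, $\varphi^{3,n+\al}_1$, $\varphi^{3,2n+1}_1$ as finite $\mathbb{R}$-linear combinations of iterated Stratonovich integrals $B^J_1$, $J=(j_1,\dots,j_a)$, $j_b\in\{0,\dots,2n\}$; the two integration-by-parts identities, $\mathbb{E}[\pa^i\delta_0(\mathbb{X}_1)\Psi]=\mathbb{E}[\delta_0(\mathbb{X}_1)\nabla^*(\Psi B^i_1)]-\mathbb{E}[\pa^{2n+1}\delta_0(\mathbb{X}_1)\langle\nabla S_1,B^i_1\rangle\Psi]$ for $i\le 2n$ and $\mathbb{E}[\pa^{2n+1}\delta_0(\mathbb{X}_1)\Psi]=\mathbb{E}[\delta_0(\mathbb{X}_1)\nabla^*(\lVert\NH S_1\rVert^{-2}\NH S_1\,\Psi)]$; and the explicit form of $\NH S_1$. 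Since $\mathbb{X}_1$ is nondegenerate in the Malliavin sense (its law has Gaveau's explicit density), these identities legitimately apply to the generalized Wiener functional $\delta_0(\mathbb{X}_1)$ in Watanabe's sense (cf.\ \cite{i-w}); moreover the Wiener functionals they produce on the right are determined only up to additive terms whose conditional expectation given $\mathbb{X}_1=0$ vanishes. Defining $\Phi^i_J$ by $\mathbb{E}[\pa^i\delta_0(\mathbb{X}_1)B^J_1]=\mathbb{E}[\delta_0(\mathbb{X}_1)\Phi^i_J]$, the asserted formula for $\Phi$ is then literally $\sum_i\varphi^{3,i}_1$ with each $B^J_1$ replaced by the corresponding $\Phi^i_J$, the numerical coefficients being those of the $\varphi^{3,i}_1$; so the task reduces to establishing the displayed closed forms for $\Phi^i_{jkl}$ and $\Phi^i_{i,0}$ ($i\le 2n$) and for $\Phi^{2n+1}_{ijkl}$ and $\Phi^{2n+1}_{i,0,j}$.

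For $i\le 2n$ one applies the first identity to $\Psi=B^J_1$, producing a Skorokhod integral $\nabla^*(B^J_1B^i_1)$ and a residual $\pa^{2n+1}\delta_0(\mathbb{X}_1)$-term carrying the factor $\langle\nabla S_1,B^i_1\rangle B^J_1$; to that residual (and, for the $i=2n+1$ formulas, to $\mathbb{E}[\pa^{2n+1}\delta_0(\mathbb{X}_1)B^J_1]$ directly) one applies the second identity. All Skorokhod integrals thus produced are reduced by the product rule $\nabla^*(hF)=F\nabla^*h-\langle\nabla F,h\rangle$ together with $\nabla^*(\mathbf{1}_{[0,1]}e_i)=B^i_1$ (the divergence of the Cameron--Martin element corresponding to $B^i_1$), and the Malliavin derivatives $D^m_sB^J_1$ of the iterated Stratonovich integrals are computed slot by slot via the Stratonovich chain rule. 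The auxiliary quantities arise in this process: $\kappa^i_1$ is the part of $\langle\nabla S_1,B^i_1\rangle$ left after stripping its chaos-one piece, $\langle\nabla S_1,B^i_1\rangle=\sigma(i)B^{\sigma(i)n+i}_1+\kappa^i_1$; $\kappa_3=\lVert\NH S_1\rVert^{-2}$, read off by squaring the $L^2([0,1])$-norm of the derivative of the given $\NH S_1$, which gives $\lVert\NH S_1\rVert^2=4\sum_{i=1}^{2n}(\int_0^1(B^i_t)^2\,dt-(B^{(i,0)}_1)^2)$; $\kappa^i_2=\langle\nabla\langle\nabla S_1,B^i_1\rangle,\NH S_1\rangle$; and $\kappa_4$ comes from the divergence $\nabla^*(\kappa_3\NH S_1)$, evaluated using the Ornstein--Uhlenbeck eigenrelation $\nabla^*\nabla S_1=2S_1$ (as $S_1$ lies in the second Wiener chaos) together with $\nabla\kappa_3=-\kappa_3^2\nabla\lVert\NH S_1\rVert^2$. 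The nested integrals in the statement, such as $\int_0^1\int_0^s B^j_uB^{\sigma(k)n+k}_u\,du\circ dB^l_s$, are exactly what one obtains by pairing the slot-by-slot derivatives $D^m_sB^J_1$ against the process representative $s\mapsto\sigma(m)(B^{\sigma(m)n+m}_1-2B^{\sigma(m)n+m}_s)$ of the $m$-th component of $\nabla S_1$, after the chaos-one cancellations forced by the projection $\NH$; the remaining pieces and the constant $B^{(\cdot,0)}_1$-type contributions are collected into the $\kappa$-factors, with the freedom to modify by functionals of vanishing conditional expectation given $\mathbb{X}_1=0$ used to bring the result to the displayed normal form.

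It is most efficient to record first the $\Phi^i_J$ with $i\le 2n$ (one pass through the second identity), then the $\Phi^{2n+1}_J$, and only at the end to assemble $\Phi$ by substituting these into $\sum_i\varphi^{3,i}_1$. The conceptual content here is light; the real difficulty --- and the step most prone to error --- is the bookkeeping. Each Skorokhod-integral reduction, each derivative $D^m_sB^J_1$, and each pairing $\langle\cdot\,,\NH S_1\rangle$ must be carried out slot by slot, carefully tracking all indices, the signs $\sigma(\cdot)$, and the combinatorial constants through the fourfold nesting needed for $\Phi^{2n+1}_{ijkl}$; keeping the $\kappa^i_1,\kappa^i_2,\kappa_3,\kappa_4$ contributions correctly separated and verifying the chaos-one cancellations is what makes this a ``direct but lengthy'' calculation rather than a conceptual one.
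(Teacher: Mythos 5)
Your proposal matches the paper's own route: the paper likewise defines $\Phi^i_J$ by $\mathbb{E}[\pa^i\delta_0(\mathbb{X}_1)B^J_1]=\mathbb{E}[\delta_0(\mathbb{X}_1)\Phi^i_J]$ via the two integration-by-parts identities built from $\nabla$, $\NH$ and $\lVert\NH S_1\rVert^{-2}$, substitutes them into $c^{S^{2n+1}}_1(x)=\sum_{i}\mathbb{E}[\pa^i\delta_0(\mathbb{X}_1)\varphi^{3,i}_1]$, and obtains the displayed closed forms by exactly the Skorokhod-reduction and slot-by-slot Malliavin-derivative bookkeeping you outline, with your identifications of $\kappa^i_1$, $\kappa^i_2$, $\kappa_3$, $\kappa_4$ consistent with the stated formulas. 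Like the paper, you leave the lengthy slot-by-slot computation unexecuted, but that is precisely the part the paper itself compresses into ``a direct but lengthy calculation.''
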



\end{document}